\documentclass[10pt]{amsart}
\usepackage[dvipsnames,usenames]{color} 
\usepackage[toc,page]{appendix}

\usepackage{tikz}  
\usepackage{mathcomp,wasysym}  
\usepackage{mathrsfs}  
\usepackage{hyperref}
\usepackage{mathtools}
\usepackage[mathscr]{euscript}  
         
\usepackage{cite}      
\usepackage{graphicx}  
\usepackage[all]{xy} \xyoption{arc} \xyoption{color}

\usepackage{amsmath} 
\usepackage{amsthm} 
\usepackage{amsfonts}  
\usepackage{amssymb} 

\usepackage{verbatim}

\oddsidemargin  .25 in
\evensidemargin .25 in
\textwidth 6.0in

\usepackage{mathrsfs}



\newcommand{\RR}{\mathbb R}

\newcommand{\bu}{v}

\newcommand{\ZZ}{{\mathbb Z}}

\newcommand{\Td}{{\mathbb T^d}}
\newcommand{\TT}{{\mathbb T}}

\newcommand{\pat}{\partial_t}
\newcommand{\pax}{\partial}

\newcommand{\vertiii}[1]{{\left\vert\kern-0.25ex\left\vert\kern-0.25ex\left\vert #1 
    \right\vert\kern-0.25ex\right\vert\kern-0.25ex\right\vert}}

\newenvironment{rcass}
  {\left.\begin{aligned}}
  {\end{aligned}\right\rbrace}



\newcounter{comentcount}
\setcounter{comentcount}{0}
\newcounter{teocount}
\setcounter{teocount}{0}
\newtheorem{lem}{Lemma}

\newtheorem{corol}{Corollary}
\newtheorem{theorem}[teocount]{Theorem}  
\newtheorem{defi}{Definition}

\title[Fractional, logistic Keller-Segel]{Boundedness and homogeneous asymptotics for a fractional logistic Keller-Segel equations}

\author[J. Burczak]{Jan Burczak}
\email{jb@impan.pl,  jan.burczak@maths.ox.ac.uk}
\address{Institute of Mathematics, Polish Academy of Sciences, Warsaw, \'Sniadeckich 8, 00-656, Poland \\
OxPDE, Mathematical Institute, University of Oxford, UK}

\author[R. Granero-Belinch\'{o}n]{Rafael Granero-Belinch\'{o}n}
\email{granero@math.univ-lyon1.fr}
\address{Univ Lyon, Universit\'e Claude Bernard Lyon 1, CNRS UMR 5208, Institut Camille Jordan, 43 blvd. du 11 novembre 1918, F-69622 Villeurbanne cedex, France.}


\begin{document}

\begin{abstract}
In this paper we consider a $d$-dimensional ($d=1,2$) parabolic-elliptic Keller-Segel equation with a logistic forcing and a fractional diffusion of order $\alpha \in (0,2)$. We prove uniform in time boundedness of its solution in the supercritical range $\alpha>d\left(1-c\right)$, where $c$ is an explicit constant depending on parameters of our problem. Furthermore, we establish sufficient conditions for  $\|u(t)-u_\infty\|_{L^\infty}\rightarrow0$, where $u_\infty\equiv 1$ is the only nontrivial homogeneous solution. Finally, we provide a uniqueness result.
\end{abstract}

\maketitle 


\section{Introduction}

We consider the following drift-diffusion equation on $\TT^d=[-\pi,\pi]^d$  with periodic boundary conditions, $d=1,2$ (equivalently, on $\mathbb{S}^d$)
\begin{align}\label{eqDD}
\pat u&=- \Lambda^\alpha u+\chi\nabla\cdot(u \nabla v)+ru(1-u),& \text{ in }(x,t)\in \TT^d\times (0,\infty)\\
\Delta v -v&=u, &\text{ in }(x,t)\in \TT^d\times(0,\infty)\label{eqDD3}\\
u(x,0)&=u_0(x)\geq0 &\text{ in }x\in \TT^d,\label{eqDD2}
\end{align}
where $\Lambda^\alpha =(-\Delta)^{\alpha/2}$ with $0<\alpha< 2$. In this paper we will assume that $r<\chi$, which is the most difficult case from the perspective of our goal: studying the large time behaviour. This is due to the fact that then the potentially `destabilizing' term, whose influence is measured by $\chi>0$, is relatively powerful compared to the `homeostatic force' quantified by $r>0$.

Let us note that \eqref{eqDD}-\eqref{eqDD2} can be written as the following active scalar equation
$$
\pat u=- \Lambda^\alpha u+\chi\nabla\cdot(u B(u))+ru(1-u) \qquad \text{ in }(x,t)\in \TT^d\times(0,\infty),
$$
where the nonlocal operator $B$ is defined as
$$
B(u)=\nabla(\Delta-1)^{-1}u.
$$ 
In the remainder of this introduction, let us discuss some of the reasons for dealing with problem \eqref{eqDD} and the known results.
 
\subsection{Classical Patlak-Keller-Segel system}
Our interest in  \eqref{eqDD}-\eqref{eqDD2} follows from an aggregation equations related to the Patlak-Keller-Segel system. 
The classical (parabolic-elliptic) Patlak-Keller-Segel equation reads
\begin{equation}\label{cKS2}
\begin{aligned}
\pat u &= \Delta u + \chi \nabla\cdot(u \nabla v ), \\
\Delta v- \nu v&=  u.
\end{aligned}
\end{equation}
This system models \emph{chemotaxis}, \emph{i.e.} a chemically-induced motion of cells and certain simple organisms (e.g. bacteria, slime mold). In its more general version, it was proposed by Patlak \cite{patlak1953random} (in a different context of mathematical chemistry, hence his name is sometimes not used in the mathematical biology context) and Keller \& Segel \cite{keller1970initiation, keller1971model, keller1980assessing}, see also reviews by Blanchet \cite{blanchet2011parabolic} and Hillen \& Painter \cite{Hillen3}. In the biological interpretation, $u$ denotes density of cells (organisms) and $v$ stands for density of a chemoattractant. We will restrict ourselves to the (relevant biologically) case of $u \ge 0$, ensured by  $u_0 \ge 0$. The parameter $\chi>0$ quantifies the sensitivity of organisms to the attracting chemical signal and $\nu \ge 0$ models its decay\footnote{Observe that  the  Patlak-Keller-Segel equation is often written for unknowns $(u, -v)$}. 

Since $\nu>0$ plays a role of damping, let us for a moment consider \eqref{cKS2} on $\RR^2$ with $\nu=0$. Note that equation \eqref{cKS2} preserves the total mass ($\|u(0)\|_{L^1} = \|u(t)\|_{L^1}$). Furthermore, in the case $\nu=0$, the space $L^1$ is invariant under the scaling of the equation. It turns out that, despite its simplicity, the Patlak-Keller-Segel equation reveals in this setting an interesting global smoothness/blowup dichotomy. Namely, for $\|u(0)\|_{L^1} > {8 \pi}{\chi^{-1}}$ the classical solutions blow-up in $L^\infty$-norm in a finite time, for $\|u(0)\|_{L^1} < {8 \pi}{\chi^{-1}}$ they exist for all times (and are bounded), whereas for $\|u(0)\|_{L^1} = {8 \pi}{\chi^{-1}}$ they exist for all times but  their $L^\infty$-norm grow to infinity in time. The related literature is abundant, so let us only mention here the seminal results by J\"ager \& Luckhaus \cite{jager1992explosions} and  Nagai \cite{nagai1995blow}, the concise note by Dolbeault \& Perthame \cite{Dolbeault2}, where the threshold mass $8 \pi \chi^{-1}$ is easy traceable, as well as  Blanchet, Carrillo \& Masmoudi \cite{BCM}, focused precisely on the threshold mass case.

\subsection{Generalisations}
Our system \eqref{eqDD}-\eqref{eqDD2} differs from \eqref{cKS2} in two aspects: it involves the semilinearity $ru(1-u)$ and the fractional diffusion. We explain below what are both applicational and analytical reasons to consider each of these modifications separately. 
 
\subsubsection{Motivation for the logistic term}\label{ssec121}
Introduction of the logistic term $r u (1-u)$ in a biology-related equation is the (second) most classical way  to take into account a population dynamics (after the Malthusian exponential models, that do not cover the full lifespan of a population), compare formula (3) of Verhulst \cite{Verhulst} and model M8 of  \cite{Hillen3} in the context of chemotaxis. In agreement with the homeostatic character of the logistic function, the equation
\begin{equation}\label{cKS2l}
\begin{aligned}
\pat u &= \Delta u + \chi \nabla\cdot(u \nabla v ) +ru(1-u), \\
\Delta v- \nu v&=  u,
\end{aligned}
\end{equation}
is less prone to admit solutions that blow-up for $r>0$ than for $r=0$, compare Tello \& Winkler \cite{TelloWinkler}. What is interesting, blowups are in fact excluded for \emph{any initial mass}, no matter what is the relation between it and parameters $r, \chi$. For further results, including the parabolic-parabolic case, we refer to Winkler \cite{Winkler4, winkler2014global}.

Let us note that  a logistic term appears in the three-component urokinase plasminogen invasion model (see Hillen, Painter \& Winkler \cite{Hillen1}) and in a chemotaxis-haptotaxis model (see Tao \& Winkler \cite{TaoWinkler}). 

The question of the nonlinear stability of the homogeneous solution $u_\infty\equiv 1$, $v_\infty\equiv-1$ has received a lot of interest recently. For instance, Chaplain \& Tello \cite{chaplain2016stability} Galakhov, Salieva \& Tello \cite{galakhov2016parabolic} (see also Salako $\&$ Shen \cite{salako2017global}) studied the parabolic-elliptic Keller-Segel system and proved that if 
$r>2\chi$
then
$
\|u(t)-1\|_{L^\infty(\TT^d)}\rightarrow 0.
$
Let us note that the authors in \cite{chaplain2016stability, galakhov2016parabolic} did not provide with any explicit rate of convergence.

In the case of doubly parabolic Keller-Segel system, the question of stability of the homogeneous solution was addressed by Lin \& Mu \cite{lin2016global}, Winkler \cite{winkler2014global}, Xiang \cite{xiang2016strong} and Zheng \cite{zheng2017boundedness}, see also Tello \& Winkler \cite{TelloWinkler2}. For conditions forcing the solutions to vanish, compare  Lankeit \cite{Lankeit15}.

\subsubsection{Motivation for the fractional diffusion}
Since 1990's, a strong theoretical and empirical evidence has appeared for replacing the classical diffusion with a fractional one in Keller-Segel equations: $\Lambda^\alpha$, $\alpha<2$ instead of the standard $-\Delta=\Lambda^{2}$. Namely, in low-prey-density conditions, feeding strategies based on a L\'evy process (generated in its simplest isotropic-$\alpha$-stable version by $(-\Delta)^\frac{\alpha}{2} u$) are closer to optimal ones from theoretical viewpoint than strategies based on the Brownian motion (generated by $-\Delta u$). Furthermore, these strategies based on a L\'evy process are actually used by certain organisms. The interested reader can consult Lewandowsky, White \& Schuster \cite{Lew_nencki} for amoebas, Klafter,  Lewandowsky \& White \cite{Klaf90} as well as Bartumeus et al. \cite{Bart03} for microzooplancton, Shlesinger \& Klafter \cite{Shl86} for flying ants and Cole \cite{Cole} in the context of fruit flies. Surprisingly, even the feeding behavior of groups of large vertebrates is argued to follow L\'evy motions, the fact referred sometimes as to the \emph{L\'evy flight foraging hypothesis}. For instance, one can read Atkinson,  Rhodes, MacDonald \& Anderson \cite{Atk} for jackals, Viswanathan et al. \cite{Vnature} for albatrosses, Focardi, Marcellini \& Montanaro \cite{deers} for deers and Pontzer et al. \cite{hadza} for the Hadza tribe. 

Interestingly, the (fractional) Keller-Segel system can be recovered as limit cases of other equations. In this regards, Lattanzio \& Tzavaras \cite{lattanzio2016gas} considered the Keller-Segel system as high friction limits of the Euler-Poisson system with attractive potentials (note that the case with fractional diffusion corresponds to the nonlocal pressure law $p(u)=\Lambda^{\alpha-2}u(x)$) while Bellouquid, Nieto \& Urrutia \cite{bellouquid2016kinetic} obtained the fractional Keller-Segel system as a hydrodynamic limit of a kinetic equation (see also Chalub, Markowich, Perthame \& Schmeiser \cite{chalub2004kinetic}, Mellet, Mischler \& Mouhot\cite{mellet2011fractional}, Aceves-Sanchez \& Mellet \cite{aceves2016asymptotic} and Aceves-Sanchez \& Cesbron \cite{aceves2016fractional}). 

\vskip 1mm
In view of the last two paragraphs, our aim to onsider the combined effect of (regularizing) logistic term and (weaker than classical) fractional diffusion is both analytically interesting and reasonable from the viewpoint of applications.

\subsection{Prior results for the Keller-Segel systems with fractional diffusions}\label{ssec1.5} Let us  recall now certain analytical results for the fraction Keller-Segel systems and its generalisations.

The system \eqref{cKS2} is part of a larger family of aggregation-diffusion-reaction systems
\begin{equation}\label{eq:1}
\left\{\begin{aligned}
\pat u&=-\Lambda^\alpha u-\chi\nabla\cdot (u K(v))+F(u),\\
\tau\pat v&=\kappa\Delta v+G(u,v),
\end{aligned}\right.
\end{equation}
$\alpha \in (0,2)$.
The system \eqref{eq:1} is referred to as a `parabolic-parabolic' one if $\tau,\kappa>0$, `parabolic-elliptic' if $\tau=0$, $\kappa>0$ and `parabolic-hyperbolic' if $\tau>0$, $\kappa=0$. 
For a more exhaustive discussion of these models, we refer to the extensive surveys by Hillen \& Painter \cite{Hillen3}, Bellomo, Bellouquid, Tao \& Winkler \cite{bellomo2015towards} and Blanchet \cite{blanchet2011parabolic}. 

In what follows, let us recall known results, in principle for the following (generic) choices $F(u) = r u (1-u)$, $r \ge 0$ and $G(u,v)=u-v$ or $G(u,v)=u$. The first interaction operator $K$ that one should have in mind is the most classical $K(v) = \nabla v$, but other choices are studied, that critically influence the system's behavior.

\subsubsection{Case of no logistic term $r=0$}

Since  $ -\Lambda^\alpha u$ provides for $\alpha<2$ a weaker dissipation than the classical one, it is expected that a blowup may occur. This is indeed the case for the generic fractional parabolic-elliptic cases in $d\geq2$, compare for instance results by Biler, Cie{\'s}lak, Karch \& Zienkiewicz  \cite{biler2014local} and  Biler \& Karch \cite{biler2010blowup}. The results for other interaction operators can be found in a vast literature on aggregation equations, not necessarily motivated by mathematical biology, including Biler, Karch \& Lauren\c{c}ot  \cite{biler2009blowup},   Li \& Rodrigo \cite{li2009finite, li2010exploding, li2009refined, li2010wellposedness}. Naturally, there are small-data global regularity results available, compare e.g. Biler \& Wu \cite{BilerWu} or \cite{BG2}. To the best of our knowledge, the question of global existence vs. finite time blow up of the fully parabolic Keller-Segel system ($\tau,\kappa>0$) with fractional diffusion and arbitrary initial data remains open (compare with Wu \& Zheng \cite{WuZheng} and \cite{BG2}). Similarly, as far as we know, the finite time blow up for the parabolic-hyperbolic case (the extreme case $\kappa=0$), remains an open problem even for low values of $\alpha$, compare \cite{Ghyperparweak,Ghyperparstrong}.

The $1$d case received much attention in the recent years. Let us review here some of the related results 
\begin{itemize}

\item A majority of the currently available results concerns the parabolic-elliptic Keller-Segel system ($\tau=0$, $\kappa\neq0$). In this context it is natural to look for a minimal strength of diffusion that gives rise to global in time smooth solutions. Escudero \cite{escudero2006fractional} proved that $\alpha>1$ leads to global existence of solutions in the large (i.e. without data smallness). Next, Bournaveas \& Calvez \cite{bournaveas2010one} obtained finite time blow up in the supercritical case $\alpha<1$ and established that for $\alpha=1$ there exists a (non-explicit) constant $K$ such that $\|u_0\|_{L^1}\leq K$ implies global in time solutions. Such a constant was later explicitly estimated as ${2\pi}^{-1}$ in Ascasibar, Granero-Belinch\'on and Moreno \cite{AGM} and improved in \cite{BG}. It was also conjectured that the case $\alpha=1$ is critical, i.e. that a large $\|u_0\|_{L^1}$ leads to a finite-time blowup, see \cite{bournaveas2010one}. Quite recently we were able to disprove that conjecture in \cite{BG3} by showing that, regardless of the size of initial data, the smooth solution exists for arbitrary large times (but our global bound is unfortunately not uniform in time yet). 

\item The parabolic-parabolic problem was considered in \cite{BG2}, both without the logistic term and with it. In the former case, beyond a typical short-time existence result and continuation criteria, we showed smoothness and regularity for $\alpha>1$ as well as, under data smallness, for  $\alpha=1$. Further results for the logistic case will be recalled in the next section.

\item The parabolic-hyperbolic problem $\tau>0$, $\kappa=0$, was proposed by Othmer \& Stevens \cite{stevens1997aggregation} as a model of the movement of myxobacteria. However, this model has been used also to study the formation of new blood vessels from pre-existing blood vessels (see  Corrias, Perthame \& Zaag \cite{corrias2003chemotaxis}, Fontelos, Friedman \& Hu \cite{fontelos2002mathematical}, Levine, Sleeman, Brian \& Nilsen-Hamilton \cite{levine2000mathematical}, Sleeman, Ward \& Wei \cite{sleeman2005existence}). Because of this, this system captured the interest of numerous researchers (see \cite{corrias2003chemotaxis, Ghyperparweak,Ghyperparstrong, fontelos2002mathematical, zhang2013global, li2010nonlinear,xie2013global, zhang2015global, fan2012blow, li2015initial, mei2015asymptotic,li2015quantitative,li2009nonlinear,zhang2007global,li2014stability,wang2008shock,
wang2016asymptotic,li2011hyperbolic,hao2012global,li2012global} and the references therein).
\end{itemize}

\subsubsection{Case with logistic term $r>0$}

Let us first quickly recall our  $1$d results in \cite{BG2} for parabolic-parabolic fractional Keller-Segel with logistic term, beyond those holding without it. We obtained global-in-time smoothness for $\alpha \ge 1$.  Interestingly, partially due to the logistic term, the considered system shows spatio-temporal chaotic behavior with peaks that emerge and eventually merge with other peaks. In that regard, we studied the qualitative properties of the attractor and obtained bounds for the number of {peaks}. This number may be related to dimension of the attractor. Mathematically, this estimate was obtained with a technique applicable to other problems with chaotic behavior, compare for instance \cite{GH}.

The currently available regularity results are much better for the parabolic-elliptic case. It turns out that the logistic term provides enough stabilisation to allow for global-in-time smooth solutions even for certain `supercritical' regime of diffusions $\alpha<d$. Namely, we have considered the $1$d case in \cite{BG4} and $2$d case in \cite{burczak2016suppression}. Let us recall some of these results, focusing on the potentially most singular case $r< \chi$, since it is within the scope of this note. For any
\[
\alpha > d\left(1-\frac{r}{\chi}\right)
\]
the problem \eqref{eqDD}-\eqref{eqDD2} enjoys global in time smooth solutions, but with no uniform-in-time bounds (i.e. without excluding the infinite-time-blowup). More precisely, we obtained in \cite{BG4, burczak2016suppression} that 
\begin{align}
\max_{0\leq t \leq T}\|u\|_{L^{\frac{\chi}{\chi-r}}}&\leq e^{rT}\|u_0\|_{L^{\frac{\chi}{\chi-r}}}\label{bound2}\\ 
\max_{0\leq t \leq T}\|u\|_{L^{p}}&\leq c_1( e^{c_2 T} +1)\|u_0\|^{c_2}_{L^{p}} \quad \text{for any finite $p$} \label{bound2s}\\ 
\max_{0\leq t \leq T}\|u\|_{L^{\infty}}&\leq c_2  e^{c_1 T},\label{bound3}
\end{align} 
where $c_1 (\| u_0\|_1, p, r,\chi, \alpha,d)$ and $c_2 (p, r,\chi, \alpha,d)$.
For $d=2$, the estimate \eqref{bound2} is given as (4.11) of Lemma 4.3 in \cite{burczak2016suppression}, \eqref{bound2s} occupies Lemma 4.4 there and \eqref{bound3} follows from computations leading to the estimate of Theorem 2. For $d=1$ the analogous results come from \cite{BG4} (some of them are not stated explicitly there, but they follow the lines of the $2$d case).

Up to now, the only uniform in time bounds we were able to provide concerned the $1$d case and they were far from satisfactory ones. They involved either dissipations that  clearly outweigh aggregation ($\alpha > d=1$) or certain smallness assumptions. For instance, for $\alpha=1, d=1$ 
\begin{equation}\label{eq:advS}
\chi<r+\frac{1}{2\pi\max\{\|u_0\|_{L^1},2\pi\}}
\end{equation}
implies an uniform in time bound
\begin{equation}\label{bound61}
\max_{0\leq t \leq T}\|u\|_{L^{\infty}}\leq c_3(r,\chi,u_0),
\end{equation}
see \cite{BG4}, Proposition 1.

\subsection{Purpose of this note}
In the case $r=0$, the system \eqref{eqDD}-\eqref{eqDD2} with $\alpha<d$ develops finite-time blowups. Hence the regime 
\[
d> \alpha > d\left(1-\frac{r}{\chi}\right),
\]
where our just-recalled existence result on global-in-time smooth solutions holds, can be seen as an interestingly 'supercritical' one. However, the non-uniformity in time of our global bounds \eqref{bound2}-\eqref{bound3} appeared to us far from optimal. 

Consequently, in this note,  we sharpen the estimates \eqref{bound2}-\eqref{bound3} to time-independent ones. Moreover, we provide conditions that ensure the convergence of the solution $u$ towards the only nontrivial homogeneous steady state $u_\infty\equiv1$, including some speed of convergence estimates. We present also a `semi-strong' uniqueness result. For statements of our results, we refer to Section \ref{sec:mr}.

\subsection{Notation for functional spaces}
Let us write $\pax^n,$ $n\in\mathbb{Z}^+$, for a generic derivative of order $n$. Then, the fractional $L^p$-based Sobolev spaces $W^{s,p}(\Td)$ (also known as Sobolev-Slobodeckii or Besov spaces $B^{s,p}_p(\Td)$) are 
$$
W^{s,p} (\Td)=\left\{f\in L^p(\Td) \; | \quad \pax^{\lfloor s\rfloor} f\in L^p(\Td), \frac{|\pax^{\lfloor s\rfloor}f(x)-\pax^{\lfloor s\rfloor}f(y)|}{|x-y|^{\frac{d}{p}+(s-\lfloor s\rfloor)}}\in L^p(\Td\times\Td)\right\},
$$
endowed with the norm
$$
\|f\|_{W^{s,p}}^p=\|f\|_{L^p}^p+\|f\|_{\dot{W}^{s,p}}^p, 
$$
$$
\|f\|_{\dot{W}^{s,p}}^p=\|\pax^{\lfloor s\rfloor} f\|^p_{L^p}+\int_{\Td}\int_{\Td}\frac{|\pax^{\lfloor s\rfloor}f(x)-\pax^{\lfloor s\rfloor}f(y)|^p}{|x-y|^{d+(s-\lfloor s \rfloor)p}}dxdy.
$$
In the case $p=2$, we write $H^s(\Td)=W^{s,2}(\Td)$ for the standard non-homogeneous Sobolev space with its norm
$$
\|f\|_{H^s}^2=\|f\|_{L^2}^2+\|f\|_{\dot{H}^s}^2, \quad \|f\|_{\dot{H}^s}=\|\Lambda^s f\|_{L^2}.
$$ 
Next, for $s\in (0,1)$, let us denote the usual H\"older spaces as follows
$$
C^{s} (\Td)=\left\{f\in C(\Td) \;| \quad \frac{|f(x)-f(y)|}{|x-y|^{s}}\in L^\infty(\Td\times\Td)\right\},
$$
with the norm
$$
\|f\|_{C^{s}}=\|f\|_{L^\infty}+\|f\|_{\dot{C}^{s}},\quad
\|f\|_{\dot{C}^{s}}=\sup_{(x,y)\in\Td\times\Td}\frac{|f(x)-f(y)|}{|x-y|^{s}}.
$$
For brevity, the domain dependance of a function space will be generally suppressed. Finally, we will use the standard notation for evolutionary (Bochner) spaces, writing $L^p(0,T; W^{s,p})$ etc. In case of suppressing the time and space domain, the outside is always time related, i.e. $L^p(L^q)$ denotes $L^p(0,T; L^q (\TT^d) )$.

\section{Main results and their discussion}\label{sec:mr}

\subsection{Classical solvability and uniform-in-time boundedness}
 In our first result, we prove that $\|u(t)\|_{L^\infty(\TT^d)}$ remains in fact uniformly bounded. In order to compute the bound, let us introduce the following numbers
\[
\mathscr{C}_{d,\alpha}=2\left(\int_{\RR^d}\frac{4\sin^2\left(\frac{x_1}{2}\right)}{|x|^{d+\alpha}} dx\right)^{-1}, \qquad \mathscr{P}_{d,\alpha}= \frac{2 \mathscr{C}_{d,\alpha}}{\left(2\pi \right)^{\alpha}d^{\frac{d+\alpha}{2}}},
\]
and for any $\epsilon \in (0, r)$, $p=\frac{\chi}{\chi-r+\epsilon}$
\[
\mathscr{M}_1(d,p,\alpha)=\left(\frac{\pi^{d/2}}{2^{1+p}}\int_{0}^\infty z^{d/2}e^{-z}dz\right)^{1/p}, \qquad \mathscr{M}_2(d,p,\alpha)=\mathscr{C}_{d,\alpha}\frac{\left(\frac{\pi^{d/2}}{\int_{0}^\infty z^{d/2}e^{-z}dz}\right)^{1+\alpha/d}}{4\cdot 2^{\frac{(p+1)\alpha}{d}}}
\]
and quantities
\[
\mathcal{R}_0(r,\epsilon,\chi,d,\alpha, u_0)= \left(\frac{ r}{\mathscr{P}_{d,\alpha}} \left(\frac{r}{\epsilon}\frac{\chi}{2\chi-r+\epsilon}\right)^{\frac{\chi}{\chi-r+\epsilon}}+\max \left\{ (2\pi)^{-d} \|u_0\|^2_{L^1(\TT^d)},(2\pi)^d\right\} \right)^
{1-\frac{r-\epsilon}{\chi}}
\]
$$
\mathcal{\tilde R}_2(r,\epsilon,\chi,d,\alpha)=  \left(\frac{ r}{\mathscr{P}_{d,\alpha}} \left(\frac{r}{\epsilon}\frac{\chi}{2\chi-r+\epsilon}\right)^{\frac{\chi}{\chi-r+\epsilon}}+ 3 (2\pi)^d \right)^
{1-\frac{r-\epsilon}{\chi}},
$$
with the latter being a data-independent one.
They are needed for (uniformly bounded in time)
\[
\begin{aligned}
\mathcal{Q}_0 (t; r,\epsilon,\chi,d,\alpha, u_0) =&  \|u_0\|_{L^{\frac{\chi}{\chi-r+\epsilon}}} e^{- \mathscr{P}(d,\alpha) t } + (1- e^{- \mathscr{P}(d,\alpha) t })  \mathcal{R}_0 , \\
 \mathcal{\tilde Q}_2  (t; r,\epsilon,\chi,d,\alpha, u_0)  =& \|u({t_0= r^{-1} \ln2})\|_{L^{\frac{\chi}{\chi-r+\epsilon}}} e^{- \mathscr{P}(d,\alpha) t } + (1- e^{- \mathscr{P}(d,\alpha) t })\mathcal{\tilde R}_2  
\end{aligned}
\]
that are involved in
\[
 \mathscr{ R}_3 (t; r,\epsilon,\chi,d,\alpha, u_0) = 2 e^{-t} \|u_0\|_{L^\infty(\TT^d)} +2 \mathcal{Q}^{\frac{3}{\sigma}}_0 (\mathscr{M}_1+ {\left(\frac{4 \chi}{\mathscr{M}_2}\right)}^{ \frac{1}{2} +\frac{1}{\sigma}} +1 )
\]
and
\[
 \mathscr{ \tilde R}_3   (t; r,\epsilon,\chi,d,\alpha, u_0)  = 2 e^{-t}  \|u({t_0= r^{-1} \ln 2}) \|_{L^\infty(\TT^d)} +2 \mathcal{\tilde Q}^{\frac{3}{\sigma}}_2 (\mathscr{M}_1+ {\left(\frac{4 \chi}{\mathscr{M}_2}\right)}^{ \frac{1}{2} +\frac{1}{\sigma}} +1 )
\]
again uniformly bounded in time.
Observe that 
\begin{equation}\label{Rinf}
\mathscr{ \tilde R}_\infty   (r,\epsilon,\chi,d,\alpha) =  \lim_{t \to \infty} \mathscr{ \tilde R}_3   (t; r,\epsilon,\chi,d,\alpha, u_0)  = 2 \mathcal{\tilde R}^{\frac{3}{\sigma}}_2 (\mathscr{M}_1+ {\left(\frac{4 \chi}{\mathscr{M}_2}\right)}^{ \frac{1}{2} +\frac{1}{\sigma}} +1 )
\end{equation}
is additionally $u_0$-independent. Having the above notions, we are ready to state

\begin{theorem}\label{thm1}
Let $u_0\in H^{d+2}$ be nonnegative, $\alpha \in (0,2)$ and  $\chi>r>0$. Then, as long as
$$
\alpha>d\left(1-\frac{r}{\chi}\right),
$$
 the problem \eqref{eqDD}-\eqref{eqDD2} admits a nonnegative classical solution
\[
u \in C (0, T; H^{d+2} (\TT^d)) \quad \cap \quad C^{2, 1} (\TT^d \times (0,T))
\]
for any finite $T$ (with nonpositive $v$ solving \eqref{eqDD3}).

Moreover, $u$ is uniformly bounded in time: 
\begin{equation}\label{boundn1}
\begin{aligned}
\|u(t)\|_{L^\infty(\TT^d)} &\leq 
 \mathscr{ R}_3 (t; r,\epsilon,\chi,d,\alpha, u_0) \quad \forall_{t \ge 0},  \\
 \|u(t)\|_{L^\infty(\TT^d)} &\leq \mathscr{ \tilde R}_3 (t; r,\epsilon,\chi,d,\alpha, u_0) \quad \forall_{t \ge r^{-1} \ln 2 },
 \end{aligned}
\end{equation}
hence in particular
\begin{equation}\label{boundninf}
\limsup_{t \to \infty}  \|u(t)\|_{L^\infty(\TT^d)} \le \mathscr{ \tilde R}_\infty   (r,\epsilon,\chi,d,\alpha) 
\end{equation}
no matter what the initial data are.
\end{theorem}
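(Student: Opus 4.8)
The plan is to obtain the existence statement from our earlier work and then upgrade the time-dependent bounds \eqref{bound2}--\eqref{bound3} to the time-uniform ones \eqref{boundn1}--\eqref{boundninf} through a three-step cascade of a priori estimates: mass ($L^1$), then $L^p$ for the slightly supercritical exponent $p=\frac{\chi}{\chi-r+\epsilon}$, then $L^\infty$. Since for $\alpha>d(1-\frac r\chi)$ the solution furnished by \cite{BG4, burczak2016suppression} lies in $C^{2,1}(\TT^d\times(0,T))$ and is nonnegative, all the differential manipulations below are licit directly; nonpositivity of $v$ is the elliptic maximum principle for $\Delta v-v=u\ge0$. For the mass, integrating \eqref{eqDD} and using $\int u^2\ge(2\pi)^{-d}(\int u)^2$ gives for $m(t):=\|u(t)\|_{L^1}$ the logistic differential inequality $m'\le rm-\tfrac{r}{(2\pi)^d}m^2$. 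Comparison with the logistic ODE yields $m(t)\le\max\{m(0),(2\pi)^d\}$ for all $t$ and, because the logistic flow saturates at a data-independent time scale, $m(t)\le 2(2\pi)^d$ for all $t\ge r^{-1}\ln 2$; these are the two mass levels entering $\mathcal R_0$ and $\tilde{\mathcal R}_2$.

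\textbf{The $L^p$ estimate.} Testing \eqref{eqDD} with $u^{p-1}$, $p=\frac{\chi}{\chi-r+\epsilon}$, I would use: (i) a fractional Córdoba--Córdoba / Stroock--Varopoulos inequality to bound the dissipation below by a multiple of $\int|\Lambda^{\alpha/2}u^{p/2}|^2$, then a torus Poincaré--Nash inequality with the explicit constant $\mathscr C_{d,\alpha}$ to control this from below in terms of $\|u\|_{L^p}$ and the (controlled) mass; (ii) $-\int u^{p-1}\nabla\!\cdot(u\nabla v)=\frac{p-1}{p}\int u^p\Delta v=\frac{p-1}{p}\int u^p(u+v)\le\frac{p-1}{p}\int u^{p+1}$, using $v\le0$ and $\Delta v=u+v$; (iii) the logistic contribution $r\int u^p-r\int u^{p+1}$. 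The algebraic identity $\frac{\chi(p-1)}{p}-r=-\epsilon$ makes all $u^{p+1}$-terms strictly dissipative, so after a Young inequality absorbing $r\int u^p$ into $\epsilon\int u^{p+1}$ (this produces precisely the factor $(\frac r\epsilon\frac{\chi}{2\chi-r+\epsilon})^{\chi/(\chi-r+\epsilon)}$ in $\mathcal R_0,\tilde{\mathcal R}_2$) one arrives at a linear differential inequality of the form $\frac{d}{dt}\|u\|_{L^p}\le-\mathscr P_{d,\alpha}\big(\|u\|_{L^p}-\mathcal R_0\big)$. Grönwall gives $\|u\|_{L^p}\le\mathcal Q_0(t)$, and restarting the estimate at $t_0=r^{-1}\ln2$ with the improved mass bound gives $\|u\|_{L^p}\le\tilde{\mathcal Q}_2(t)$ for $t\ge t_0$.

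\textbf{The $L^\infty$ estimate.} I would evaluate \eqref{eqDD} at a point $x_t$ where $u(\cdot,t)$ is maximal. There $\nabla u(x_t)=0$, so $\nabla\!\cdot(u\nabla v)(x_t)=u\Delta v(x_t)=u(x_t)(u(x_t)+v(x_t))\le\|u\|_{L^\infty}^2$, and hence $\frac{d}{dt}\|u\|_{L^\infty}\le-\Lambda^\alpha u(x_t)+(\chi-r)\|u\|_{L^\infty}^2+r\|u\|_{L^\infty}$. The key ingredient is a quantitative torus nonlocal maximum principle obtained by a level-set (Chebyshev) splitting of the singular-integral representation of $\Lambda^\alpha$: when $\|u\|_{L^\infty}$ exceeds an explicit multiple of $\|u\|_{L^p}$ one has $\Lambda^\alpha u(x_t)\ge\mathscr M_2\,\|u\|_{L^\infty}^{1+p\alpha/d}\,\|u\|_{L^p}^{-p\alpha/d}$ (the constant $\mathscr M_2$ and the $2^{(p+1)\alpha/d}$ in it coming from the splitting together with $\|u\|_{L^{p+1}}^{p+1}\le\|u\|_{L^\infty}\|u\|_{L^p}^p$). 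Since $\alpha>d(1-\frac r\chi)$, choosing $\epsilon$ small makes $1+\frac{p\alpha}{d}>2$, so the dissipation strictly dominates the quadratic chemotactic term once $\|u\|_{L^\infty}$ is large; the resulting scalar differential inequality integrates to $\mathscr R_3(t)$ — the $e^{-t}\|u_0\|_{L^\infty}$ term being the transient decay from large data and the $\mathcal Q_0^{3/\sigma}(\mathscr M_1+(4\chi/\mathscr M_2)^{1/2+1/\sigma}+1)$ term the saturation level governed by the $L^p$ bound and the gap exponent $\sigma$. Restarting at $t_0$ yields $\tilde{\mathscr R}_3(t)$, and letting $t\to\infty$ together with $\tilde{\mathcal Q}_2\to\tilde{\mathcal R}_2$ gives \eqref{boundninf} and \eqref{Rinf}.

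\textbf{Main obstacle.} The hard part is the $L^\infty$ step in the genuinely supercritical range $\alpha<d$: a pure $L^\infty$ maximum-principle estimate cannot close there, because at the maximum the fractional dissipation scales only like $\|u\|_{L^\infty}^{1+\alpha/d}$ with $1+\frac\alpha d<2$, which is weaker than the quadratic aggregation $\chi\|u\|_{L^\infty}^2$. The resolution is exactly the cascade — the a priori $L^p$ bound of the previous step enters the denominator of the nonlocal maximum principle and effectively raises the dissipation exponent to $1+\frac{p\alpha}{d}$, which the hypothesis forces above $2$. Carrying this out while keeping every constant explicit, i.e. establishing the torus Córdoba--Córdoba and Poincaré--Nash inequalities with sharp constants $\mathscr C_{d,\alpha},\mathscr P_{d,\alpha},\mathscr M_1,\mathscr M_2$ and tracking $\sigma$ through the ODE integrations, is the technical heart of the argument.
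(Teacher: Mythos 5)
Your proposal is correct and follows essentially the same route as the paper: the cascade $L^1\to L^{\chi/(\chi-r+\epsilon)}\to L^\infty$, with the logistic Bernoulli inequality for the mass (and the restart at $t_0=r^{-1}\ln 2$ for data-independence), the exact cancellation $\chi\frac{p-1}{p}-r=-\epsilon$ together with the Poincar\'e-type lower bound on the dissipation for the $L^p$ step, and the pointwise nonlocal maximum-principle dichotomy (Lemma \ref{lemaaux3}) raising the dissipation exponent to $1+\frac{\alpha p}{d}>2$ for the $L^\infty$ step. The only cosmetic difference is that the paper closes the $L^\infty$ estimate by a barrier/first-crossing-time contradiction at the level $\mathscr M_3$ rather than by literally integrating the scalar differential inequality, which is why $2e^{-t}\|u_0\|_{L^\infty}$ appears as an entry of the barrier; this does not change the substance of the argument.
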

Let us recall that the condition $\chi>r$ was chosen not as a simplification, but to focus ideas. In fact, it is the most demanding case. It can be easily seen, since writing $\bar u (t) = max_x u (x, t) = u (x_t, t) $, $v_u (t) =  v (x_t, t)$ we have the ODI (formally, but easily made rigorous)
\[
\frac{d}{dt} \bar u \le 0 + \chi (0 + \bar u \Delta v_u) + r \bar u  - r \bar u^2 \le  \chi  \bar u (\bar u + v_u) + r \bar u  - r \bar u^2 \le   r \bar u  - (r- \chi) \bar u^2
\]
which is a Bernoulli ODI, so 
\[
 \bar u (T) \le \frac{ \bar u (0) }{e^{-rT} + \frac{r- \chi}{r} (1-e^{-rT}) \bar u (0)} \to r/ (r- \chi) \quad \text{ as } T \to \infty.
\]

\subsection{Stability of the homogeneous solution $u_\infty=1$, $v_\infty=-1$}
The first result here ensures the exponential convergence $u\rightarrow u_\infty$, as long as $\chi$ and $r$ are close to each other in terms of the initial data.
Let us define the time-independent upper bound for $\mathscr{ R}_3 (t; r,\epsilon,\chi,d,\alpha, u_0)$ of \eqref{boundn1} via
\[
\mathscr{ \bar R}_3 (r,\epsilon,\chi,d,\alpha, u_0) = 2  \|u_0\|_{L^\infty(\TT^d)} +2 \mathcal{Q}^{\frac{3}{\sigma}}_0 (\mathscr{M}_1+ {\left(\frac{4 \chi}{\mathscr{M}_2}\right)}^{ \frac{1}{2} +\frac{1}{\sigma}} +1 )
\]

\begin{theorem}[Stability of the homogeneous solution I]\label{thm2}
Let $u\in H^{d+2}$ be the classical solution to \eqref{eqDD}-\eqref{eqDD2} starting from $u_0\in H^{d+2}$, $u_0 \not\equiv 0$, $u_0 \ge 0$. Assume that $\alpha \in (0,2)$ and  $\chi>r>0$. Let $\alpha$ be such that 
$$
\alpha>d\left(1-\frac{r}{\chi}\right).
$$
Moreover, assume that $\chi>r$ are close enough in terms of data, so that
\begin{equation}\label{ssc1}
-\gamma:=2\chi-r+2(\chi-r)\left( \mathscr{ \bar R}_3 (r,\epsilon,\chi,d,\alpha, u_0) -1\right)-\frac{(2\pi)^d\mathscr{C}_{d,\alpha}}{({2}\pi\sqrt{d})^{d+\alpha}} < 0.
\end{equation}
Then 
$$
\|u(t)-1\|_{L^\infty(\TT^d)}\leq
\left(\|u(t)\|_{L^\infty(\TT^d)}-\min_{x\in\TT^d}u(x,t)\right)\leq \left(\|u_0\|_{L^\infty(\TT^d)}-\min_{x\in\TT^d}u_0(x)\right)e^{-\gamma t} 
$$
\end{theorem}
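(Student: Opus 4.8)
The plan is to derive a differential inequality for the oscillation
\[
\mathcal{O}(t):=\|u(t)\|_{L^\infty(\TT^d)}-\min_{x\in\TT^d}u(x,t)=u(\bar x_t,t)-u(\underline x_t,t),
\]
where $\bar x_t,\underline x_t$ are points realising the max and min of $u(\cdot,t)$. First I would note the elementary bound $\|u-1\|_{L^\infty}\le\mathcal{O}(t)$: since $u$ has positive mass on $\TT^d$ of a size that (by the construction behind Theorem \ref{thm1}, in particular the $L^{\chi/(\chi-r+\epsilon)}$ estimate and the lower bound coming from the logistic term) forces $\min_x u(x,t)\le 1\le\|u(t)\|_{L^\infty}$ for all $t$ large enough, and in fact for all $t\ge0$ once one checks the homogeneous sub/supersolution comparison $y'=ry(1-y)$ sandwiches the extremal values; so the value $1$ lies between $\min u$ and $\max u$, giving the claimed first inequality.

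The core is the ODE comparison for $\mathcal{O}(t)$. I would compute, formally (and then justify rigorously by a standard approximation / Rademacher-type argument for the Lipschitz-in-time extremal values, exactly as the excerpt already does for $\bar u(t)$ in the remark after Theorem \ref{thm1}),
\[
\frac{d}{dt}\mathcal{O}(t)=\pat u(\bar x_t,t)-\pat u(\underline x_t,t).
\]
At the maximum point $-\Lambda^\alpha u(\bar x_t,t)\le -c\,\big(u(\bar x_t,t)-\text{average}\big)$ via the pointwise nonlinear lower bound for the fractional Laplacian on the torus (this is where $\mathscr{C}_{d,\alpha}/({2\pi\sqrt d})^{d+\alpha}$ enters, the same constant appearing in \eqref{ssc1}); at the minimum point the sign is reversed. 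Using $\Delta v=v+u$ one rewrites the chemotactic term $\chi\nabla\cdot(u\nabla v)=\chi\nabla u\cdot\nabla v+\chi u(v+u)$, and at an interior extremum $\nabla u=0$, so only $\chi u(v+u)$ survives; combining with the logistic term $ru-ru^2$ and with the already-established uniform bound $\|u\|_{L^\infty}\le \mathscr{\bar R}_3$ and $\|v\|_{L^\infty}$-type control (here $v\le 0$ and $\|v\|_{L^\infty}\le\|u\|_{L^1}$-scale bounds), one estimates the difference of the nonlinear terms at the two points by $\big(2\chi-r+2(\chi-r)(\mathscr{\bar R}_3-1)\big)\mathcal{O}(t)$ up to the dissipative gain. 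Assembling these yields
\[
\frac{d}{dt}\mathcal{O}(t)\le\Big(2\chi-r+2(\chi-r)(\mathscr{\bar R}_3-1)-\tfrac{(2\pi)^d\mathscr{C}_{d,\alpha}}{(2\pi\sqrt d)^{d+\alpha}}\Big)\mathcal{O}(t)=-\gamma\,\mathcal{O}(t),
\]
and Gr\"onwall gives $\mathcal{O}(t)\le\mathcal{O}(0)e^{-\gamma t}$, i.e. the second inequality.

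The main obstacle is the bookkeeping in the nonlinear-term estimate: one must show that at the two distinct extremal points the combination $\chi u(v+u)+ru-ru^2$ differs by no more than $\gamma$-worth of $\mathcal{O}(t)$, and this requires carefully using (i) that $u(\bar x_t,t)\ge u(\underline x_t,t)$, (ii) the uniform-in-time $L^\infty$ bound $\mathscr{\bar R}_3$ from Theorem \ref{thm1} to linearise $u^2-\text{(other }u^2)$ around $1$, and (iii) a matching control on the difference $v(\bar x_t,t)-v(\underline x_t,t)$ in terms of $\mathcal{O}(t)$ via the representation $v=(\Delta-1)^{-1}u$ and the maximum principle for that elliptic operator (which gives $\|v\|_{\dot C^0}$-type bounds by $\mathcal{O}(t)$, or more simply $-1\le$ suitable shifted quantities). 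The secondary technical point, already flagged as routine in the excerpt's own remark, is making the differentiation of $\max_x u$ and $\min_x u$ rigorous; I would handle it by the usual device of bounding $\limsup_{h\to0^+}\frac1h(\mathcal{O}(t+h)-\mathcal{O}(t))$ using that $u\in C^{2,1}$ and the extremal points vary, which suffices for the Gr\"onwall conclusion.
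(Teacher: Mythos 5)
Your derivation of the quantitative (second) inequality is essentially the paper's own argument: the paper likewise writes pointwise ODIs for $\overline{U}(t)=\max_x(u-1)$ and $\underline{U}(t)=\min_x(u-1)$ (it works in the shifted variables $U=u-1$, $V=v+1$, which is only cosmetic relative to your unshifted version), uses $\nabla u=0$ at the extrema together with $\Delta v=u+v$ and the bound $-\overline{U}\le V\le-\underline{U}$ to reduce the drift term to $\chi u(u+v)$, inserts the torus lower bound $\Lambda^{\alpha}U(\overline{x}_t)\ge \mathscr{C}_{d,\alpha}\bigl((2\pi)^d\overline{U}-\int_{\TT^d}U\bigr)/(2\pi\sqrt{d})^{d+\alpha}$ and its reverse at $\underline{x}_t$ (so the averages cancel in the difference), controls $\overline{U}+\underline{U}\le 2(\mathscr{\bar R}_3-1)$ via Theorem \ref{thm1}, and closes with Gr\"onwall. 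So for the oscillation decay your plan matches the paper.

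The genuine gap is in your justification of the first inequality $\|u(t)-1\|_{L^\infty}\le \|u(t)\|_{L^\infty}-\min_x u(x,t)$, which is equivalent to $\min_x u(x,t)\le 1\le \max_x u(x,t)$. Your claim that a sub/supersolution comparison with $y'=ry(1-y)$ ``sandwiches'' $1$ between the extremal values for all $t\ge0$ does not hold up: at the maximum point the surviving drift contribution is $\chi\bar u(\bar u+v(\overline{x}_t))$ with $\bar u+v(\overline{x}_t)\le \bar u-\underline u$ (indefinite sign for your purposes) and $-\Lambda^\alpha u(\overline{x}_t)\le 0$, so nothing forces $\max_x u\ge 1$ to propagate; for data of small oscillation lying strictly below $1$ the maximum stays below $1$. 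The paper does not attempt such a pointwise sandwich. It instead concludes from the exponential decay of the oscillation that $u$ tends to a constant, notes the only constant steady states are $0$ and $1$, and rules out $0$ by a specifically nonlocal argument: if $u(\underline{x}_t,t)=0$ with $u(\cdot,t)\not\equiv0$, then $\partial_t u(\underline{x}_t,t)=-\Lambda^\alpha u(\underline{x}_t,t)>0$ strictly, so the solution cannot collapse to the zero state. This exclusion-of-zero step --- which is precisely where the hypothesis $u_0\not\equiv0$ enters --- is absent from your proposal, and without it (or some substitute locating $1$ between $\min u$ and $\max u$) the passage from the oscillation bound to $\|u-1\|_{L^\infty}$ is not justified. (For completeness: the paper's own final step $\max\{a,b\}\le a+b$ also tacitly uses $a,b\ge0$, so the convergence argument is genuinely needed there too, not merely decorative.)
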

\begin{corol}\label{cor:1}
In condition \eqref{ssc1} we can replace $\mathscr{\bar R}_3$ with the initial-data independent $\mathscr{ \tilde R}_\infty$ of  \eqref{boundninf}, at the cost of having the statement valid only for times $t^* \ge t (r,\epsilon,\chi,d,\alpha, u_0)$ 
\end{corol}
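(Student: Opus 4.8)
The plan is to reuse, essentially verbatim, the differential inequality that drives Theorem \ref{thm2}. That proof rests on a single ODI for $g(t):=\|u(t)\|_{L^\infty(\TT^d)}-\min_{x\in\TT^d}u(x,t)$, schematically
\[
\frac{d}{dt}g(t)\le\Bigl(2\chi-r+2(\chi-r)\bigl(\|u(t)\|_{L^\infty(\TT^d)}-1\bigr)-\tfrac{(2\pi)^d\mathscr{C}_{d,\alpha}}{(2\pi\sqrt d)^{d+\alpha}}\Bigr)\,g(t),
\]
obtained by evaluating \eqref{eqDD} at the maximum and minimum points of $u(\cdot,t)$ and using the elementary bounds on $v$ and on $\Lambda^\alpha u$ at extrema; the bracket was then rendered $\le-\gamma<0$ by feeding in the \emph{time-uniform} estimate $\|u(t)\|_{L^\infty(\TT^d)}\le\mathscr{\bar R}_3$ from \eqref{boundn1} (note that the bracket is precisely the left side of \eqref{ssc1} with $\mathscr{\bar R}_3$ replaced by $\|u(t)\|_{L^\infty(\TT^d)}$, and that the coefficient $2(\chi-r)$ of $\|u(t)\|_{L^\infty(\TT^d)}$ is positive). \textbf{The only way the initial data enter this bracket is through the single quantity $\|u(t)\|_{L^\infty(\TT^d)}$.} So the plan is to keep the ODI intact but to replace the time-uniform bound by the data-independent \emph{eventual} bound $\limsup_{t\to\infty}\|u(t)\|_{L^\infty(\TT^d)}\le\mathscr{\tilde R}_\infty$ of \eqref{boundninf}; the price is that the bracket is only controlled from some data-dependent time on, which is exactly the $t^\ast$ in the statement.

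Concretely, I would assume \eqref{ssc1} holds with $\mathscr{\bar R}_3$ replaced by $\mathscr{\tilde R}_\infty$, call $-\gamma<0$ the resulting value, and fix $\delta>0$ so small that $2(\chi-r)\delta<\gamma$. First I would use \eqref{boundn1}, which gives $\|u(t)\|_{L^\infty(\TT^d)}\le\mathscr{\tilde R}_3(t;r,\epsilon,\chi,d,\alpha,u_0)$ for $t\ge r^{-1}\ln2$, together with \eqref{Rinf}, to observe $\mathscr{\tilde R}_3(t)\to\mathscr{\tilde R}_\infty$: the $e^{-t}$ and $e^{-\mathscr{P}(d,\alpha)t}$ contributions decay explicitly, and for the power $\mathcal{\tilde Q}_2^{3/\sigma}$ one transfers the convergence $\mathcal{\tilde Q}_2(t)\to\mathcal{\tilde R}_2$ by the mean value theorem. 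From this I would extract an \emph{explicit} $t^\ast=t(r,\epsilon,\chi,d,\alpha,u_0)\ge r^{-1}\ln2$ with $\|u(t)\|_{L^\infty(\TT^d)}\le\mathscr{\tilde R}_\infty+\delta$ for all $t\ge t^\ast$. Then I would restart the clock at $t^\ast$ — legitimate because $u(\cdot,t^\ast)$ is nonnegative, lies in $H^{d+2}$, and is not identically zero (since $u_0\not\equiv0$, $u_0\ge0$ force $u(t)>0$ for $t>0$), hence is an admissible initial datum for Theorem \ref{thm2} — and run the ODI on $[t^\ast,\infty)$; there the bracket is bounded by $-\gamma+2(\chi-r)\delta<0$, so Gr\"onwall yields
\[
\|u(t)-1\|_{L^\infty(\TT^d)}\le g(t)\le g(t^\ast)\,e^{-(\gamma-2(\chi-r)\delta)(t-t^\ast)}\qquad\text{for all }t\ge t^\ast,
\]
which is the asserted estimate (the first inequality $\|u(t)-1\|_{L^\infty}\le g(t)$ being the one already used in Theorem \ref{thm2}).

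If one prefers to retain the \emph{exact} rate $\gamma$ instead of $\gamma-2(\chi-r)\delta$, I would alternatively not freeze $\|u(t)\|_{L^\infty(\TT^d)}$ but keep it under the Gr\"onwall integral and use that $\mathscr{\tilde R}_3(s)-\mathscr{\tilde R}_\infty$ decays exponentially, so that $I(u_0):=2(\chi-r)\int_{r^{-1}\ln2}^{\infty}\bigl(\mathscr{\tilde R}_3(s)-\mathscr{\tilde R}_\infty\bigr)_+\,ds<\infty$; this reproduces the Theorem \ref{thm2} bound with exponent $e^{-\gamma t}$ but with an extra data-dependent multiplicative constant $e^{I(u_0)}$ (and without introducing $t^\ast$ at all).

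The whole thing is bookkeeping rather than analysis. The three items to verify are: (i) that in the proof of Theorem \ref{thm2} the data-dependence of the ODI is genuinely confined to $\|u(t)\|_{L^\infty(\TT^d)}$ and to the admissibility of the initial datum; (ii) the elementary decay $\mathscr{\tilde R}_3(t)\to\mathscr{\tilde R}_\infty$ and the extraction from it of a fully explicit threshold $t^\ast$; and (iii) that $u(\cdot,t^\ast)$ is an admissible initial datum. I expect (ii) — converting the convergence $\mathscr{\tilde R}_3(t)\to\mathscr{\tilde R}_\infty$ into a usable explicit $t^\ast$, given the several nested exponentials and powers in the definition of $\mathscr{\tilde R}_3$ — to be the only mildly laborious step; the single genuine subtlety is that \eqref{boundninf} supplies a $\limsup$ rather than a true uniform bound, which is precisely what forces either the infinitesimal loss $2(\chi-r)\delta$ in the decay rate or the extra constant $e^{I(u_0)}$.
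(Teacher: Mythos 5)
Your proposal is correct and follows essentially the same route as the paper, which simply observes that every occurrence of $\mathscr{R}_3$ in the proof of Theorem \ref{thm2} may be replaced by the data-independent $\mathscr{\tilde R}_\infty$ at the cost of restricting to sufficiently large times $t\ge t^*$. Your write-up merely makes explicit the details the paper leaves implicit (the convergence $\mathscr{\tilde R}_3(t)\to\mathscr{\tilde R}_\infty$, the $\delta$-slack needed because \eqref{boundninf} is only a $\limsup$, and the admissibility of $u(\cdot,t^*)$ as a new initial datum), all of which are consistent with the paper's argument.
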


Our second stability result for  $u_\infty=1$, $v_\infty=-1$ concerns the `critical' case $\alpha =1$ in $d=1$. We do not obtain rate of convergence as before, but the conditions on $\chi$ and $r$ are straightforward.
\begin{theorem}[Stability of the homogeneous solution II]\label{thm2b}
Assume that $d=\alpha=1$. Let $u\in H^{3}$ be the classical solution to \eqref{eqDD}-\eqref{eqDD2} starting from $u_0\in H^{3}$,  $u_0 \not\equiv 0$, $u_0 \ge 0$. Assume that $\alpha \in (0,2)$, $\chi > r >0$ and 
$$
\chi<\frac{1}{8\pi^2}.
$$
Then we have that
$$
\|u(t)-1\|_{L^\infty(\TT^d)}\rightarrow 0.
$$
\end{theorem}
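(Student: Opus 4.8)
The plan is to sidestep the smallness-in-data condition \eqref{ssc1} of Theorem~\ref{thm2} by first letting the mass of $u$ settle into a controlled range, and then running an oscillation ODI of the same flavour as in the proof of Theorem~\ref{thm2}, which in the critical one-dimensional case $d=\alpha=1$ can be closed thanks to the $L^1$-based a priori bound available once $\chi<\tfrac1{8\pi^2}$. \textbf{Step~1 (the mass drops below $2\pi$).} Integrating \eqref{eqDD} over $\TT$ and using Jensen's inequality, $m(t):=\int_{\TT}u(x,t)\,dx$ satisfies $m'(t)\le r\,m(t)\bigl(1-\tfrac{m(t)}{2\pi}\bigr)$, whence $\limsup_{t\to\infty}m(t)\le2\pi$; moreover $u_0\ge0$, $u_0\not\equiv0$ force $m(t)>0$ for all $t$. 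Fixing $\eta>0$ small, there is $t_0=t_0(u_0,r)$ with $m(t)\le2\pi+\eta$ on $[t_0,\infty)$; taking $u(t_0)\in H^3$ (Theorem~\ref{thm1}) as initial datum, on $[t_0,\infty)$ we deal with a classical solution of mass $\le2\pi+\eta$, for which the condition \eqref{eq:advS} reads $\chi<r+\tfrac{1}{2\pi(2\pi+\eta)}$, satisfied for $\eta$ small because $\chi<\tfrac1{8\pi^2}$. Hence Proposition~1 of \cite{BG4} (cf.\ \eqref{bound61}) provides $\sup_{t\ge t_0}\|u(t)\|_{L^\infty}<\infty$.

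\textbf{Step~2 (oscillation ODI).} Set $\bar u(t)=\max_x u(\cdot,t)=u(x^+_t,t)$, $\underline u(t)=\min_x u(\cdot,t)=u(x^-_t,t)$ and $h(t)=\bar u(t)-\underline u(t)$. Evaluating \eqref{eqDD} at $x^\pm_t$ (legitimate by the regularity in Theorem~\ref{thm1}), using $\partial_x u(x^\pm_t)=0$, the extremal inequality $\Lambda u(x^+_t)-\Lambda u(x^-_t)\ge\tfrac{(2\pi)^d\mathscr{C}_{d,\alpha}}{(2\pi\sqrt d)^{d+\alpha}}\,h(t)$ (from the crude kernel bound $|z|^{-(d+\alpha)}\ge(2\pi\sqrt d)^{-(d+\alpha)}$ on $\TT^d$), the identity $\Delta v=v+u$, the sign $v\le0$, the averaging bound $\max v-\min v\le\max u-\min u=h$ (because $-v=G*u$ with $G\ge0$, $\int G=1$), and the elementary $\|u(t)\|_{L^\infty}\le\tfrac{m(t)}{2\pi}+h(t)$, one is led — after discarding the manifestly stabilising terms, including the logistic one when $\bar u\ge1$ — to
\[
\frac{d}{dt}h(t)\ \le\ \Bigl(-\tfrac{(2\pi)^d\mathscr{C}_{d,\alpha}}{(2\pi\sqrt d)^{d+\alpha}}+3\,\chi\,\tfrac{m(t)}{2\pi}+r\Bigr)h(t)+3\,\chi\,h(t)^2 .
\]
For $d=\alpha=1$ the dissipative coefficient equals $\tfrac1{2\pi^2}$, and since $r<\chi<\tfrac1{8\pi^2}$ one has $3\chi+r<4\chi<\tfrac1{2\pi^2}$, so with $m(t)\to2\pi$ the bracket is eventually $\le-\lambda<0$.

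\textbf{Step~3 (closing the loop and conclusion).} The inequality $\tfrac{d}{dt}h\le-\lambda h+3\chi h^2$ forces $h(t)\to0$ exponentially once $h$ has entered the basin $\{h<\lambda/(3\chi)\}$, and the bound of Step~1 rules out any blow-up of $h$. To push $h$ into that basin one bootstraps with the refined mass identity $m'=r\,m(1-\tfrac m{2\pi})-r\|u-\tfrac m{2\pi}\|_{L^2}^2$ and $\|u-\tfrac m{2\pi}\|_{L^2}^2\le2\pi\,h^2$: while $h$ is not small, $m$ cannot relax to $2\pi$, yet the maximum equation $\tfrac{d}{dt}\bar u\le-\tfrac{(2\pi)^d\mathscr{C}_{d,\alpha}}{(2\pi\sqrt d)^{d+\alpha}}\bigl(\bar u-\tfrac m{2\pi}\bigr)+\chi\bar u\,v(x^+_t)+(\chi-r)\bar u^2+r\bar u$, with $v(x^+_t)\le0$, keeps pushing $\bar u$ and hence $h=\bar u-\underline u$ down; a continuity argument then yields $h(t)\to0$ and $m(t)\to2\pi$ together. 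Finally, $\min_x u\le\tfrac{m}{2\pi}\le\|u\|_{L^\infty}$ gives $\|u(t)-\tfrac{m(t)}{2\pi}\|_{L^\infty}\le h(t)$, so $\|u(t)-1\|_{L^\infty}\le\bigl|\tfrac{m(t)}{2\pi}-1\bigr|+h(t)\to0$.

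The main obstacle is precisely the bootstrap in Step~3: the only a priori $L^\infty$-bound at our disposal — from Theorem~\ref{thm1}, or from Proposition~1 of \cite{BG4} — sits far above $1$, so linearisation around $u_\infty$ is unavailable and the oscillation ODI carries a genuinely destabilising quadratic term; the smallness $\chi<\tfrac1{8\pi^2}$ has to be used twice, to land in the $L^1$-regime where the critical one-dimensional bound of \cite{BG4} applies and to render the Riccati coefficient $-\tfrac1{2\pi^2}+3\chi+r$ negative after the mass has relaxed near $2\pi$, and certifying that the oscillation actually enters the basin of attraction of $0$ — not just stays bounded — is the delicate point; this is also why the theorem comes with no convergence rate, unlike Theorem~\ref{thm2}. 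A secondary, routine point is the rigorous justification of the pointwise equations for $\bar u,\underline u$ and of the extremal fractional-Laplacian inequality, standard given the regularity from Theorem~\ref{thm1}.
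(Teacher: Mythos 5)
Your overall architecture (relax the mass to $2\pi+\delta$, then close an oscillation ODI for $h=\overline u-\underline u$ and conclude convergence to a constant, which must be $1$ since $0$ is unstable) matches the paper's, but Step~3 contains a genuine gap, and it is exactly the step the paper resolves by other means. Your oscillation ODI carries the destabilising quadratic term $3\chi h^2$ (the paper's \eqref{eq:s3f} carries $(\chi-r)h(\overline u+\underline u)$), and your linear dissipation constant, obtained from the crude kernel bound $|z|^{-(d+\alpha)}\ge(2\pi\sqrt d)^{-(d+\alpha)}$, is only $\tfrac{1}{2\pi^2}$; the resulting $\lambda=\tfrac1{2\pi^2}-4\chi-O(\eta)$ can be arbitrarily small when $\chi$ is close to $\tfrac1{8\pi^2}$, so the basin $\{h<\lambda/(3\chi)\}$ is tiny while the only a priori bound on $h$ available (Theorem~\ref{thm1}, or \eqref{bound61}) sits far above it. Your proposed bootstrap does not bridge this: the maximum-principle ODI you write, $\frac{d}{dt}\overline u\le-\tfrac{1}{2\pi^2}\bigl(\overline u-\tfrac{m}{2\pi}\bigr)+\chi\overline u\,v(x^+_t)+(\chi-r)\overline u^2+r\overline u$, has a net \emph{positive} quadratic coefficient $(\chi-r)>0$ against only linear dissipation, so for large $\overline u$ it permits growth rather than ``keeping $\overline u$ down''; and the mass identity $m'=rm(1-\tfrac m{2\pi})-r\|u-\tfrac m{2\pi}\|_{L^2}^2$ only says that persistent oscillation depresses $m$ — it yields no quantitative decay of $h$.

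The paper supplies the missing mechanism with two ingredients absent from your proposal. First, the exact one-dimensional periodic kernel \eqref{eq:9b}, $\Lambda u(x)=\tfrac1{4\pi}\int_\TT\frac{u(x)-u(x-y)}{\sin^2(y/2)}\,dy$ with $\sin^2\le1$, gives the linear dissipation constant $\tfrac12$ in \eqref{eq:s3f} rather than $\tfrac1{2\pi^2}$. Second, and decisively, the pointwise dichotomy of Lemma~\ref{lemaaux3} with $p=1$: whenever $\overline u>\tfrac2\pi\|u\|_{L^1}$ one has the \emph{nonlinear} lower bound $\Lambda u(\overline x_t)\ge\tfrac1{4\pi}\,\overline u^2/\|u\|_{L^1}$, and since $\|u\|_{L^1}\le2\pi+\delta$ and $\chi<\tfrac1{4\pi(2\pi+\delta)}$ this dominates $\chi\overline u^2$, leaving $\frac{d}{dt}\overline u\le r\overline u(1-\overline u)$; hence $\overline u$ is forced down to, and then trapped below, $4+\tfrac{2\delta}\pi$ by a first-crossing argument. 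Only after that does the oscillation ODI close linearly, via $(\chi-r)(8+\tfrac{4\delta}\pi)+r-\tfrac12<0$ — a computation that genuinely needs the constant $\tfrac12$ (with $\tfrac1{2\pi^2}$ in its place the sign is wrong). Without Lemma~\ref{lemaaux3}, or some substitute providing superlinear dissipation at large maxima, your argument cannot certify that $h$ ever enters the basin of attraction of $0$, which is precisely the point you flag as delicate but leave unproved.
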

Let us recall from Section \ref{ssec121} that question of the nonlinear stability of the homogeneous solution $u_\infty\equiv 1$, $v_\infty\equiv-1$ in classical (more diffusive) setting has received a lot of interest recently \cite{chaplain2016stability,galakhov2016parabolic,salako2017global}.

\subsection{Uniqueness}
The previous theorems employ  notions of solutions with high regularity. However, for the sake of our uniqueness result (Theorem \ref{thm3} below), let us introduce
\begin{defi}\label{def:1}
If a  function 
 \[u \in L^2(0,T;L^2(\TT^d))
 \] satisfies \eqref{eqDD}, \eqref{eqDD2} in the following sense
\begin{align*}
{\int_{\TT^ d} u_0 \varphi(0)dx} -\int_0^T \int_{\TT^ d} u \pat \varphi dx ds +\int_0^T \int_{\TT^ d}  u  \Lambda^{\alpha}  \varphi dxds &= -\chi \int_0^T \int_{\TT^ d}  (u B(u))  \nabla \varphi dxds \\
&\quad+ r \int_0^T \int_{\TT^ d} u(1-u)  \varphi dxds
\end{align*}
for a sufficiently smooth $\varphi$, it is called a \emph{distributional solution} to \eqref{eqDD}, \eqref{eqDD2}.
\end{defi}

It holds
\begin{theorem}\label{thm3}
Let $\alpha>1$ and  $r \ge 0$. Nonnegative solutions $u\in L^\infty (L^2) \cap L^2 (H^\alpha)$ to \eqref{eqDD}-\eqref{eqDD2} are unique (in $L^\infty (L^2) \cap L^2 (H^\alpha)$ class).
\end{theorem}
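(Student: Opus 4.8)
The plan is a Grönwall/energy argument on the difference of two solutions, with the low regularity $L^\infty(L^2)\cap L^2(H^\alpha)$ compensated by two structural facts available for $d\le 2$, $\alpha>1$: the Sobolev embedding $H^\alpha\hookrightarrow L^\infty$ (so that each solution lies in $L^2(0,T;L^\infty)$), and the elementary properties of $B=\nabla(\Delta-1)^{-1}$, namely $\na\cdot B(f)=f+(\Delta-1)^{-1}f$, $\|B(f)\|_{H^1}\le C\|f\|_{L^2}$ and $\|(\Delta-1)^{-1}f\|_{H^2}\le C\|f\|_{L^2}$. Let $u_1,u_2$ be two nonnegative solutions in the stated class issuing from the same $u_0$ and set $w=u_1-u_2$. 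Using $u_1B(u_1)-u_2B(u_2)=wB(u_1)+u_2B(w)$ and $u_1^2-u_2^2=w(u_1+u_2)$, the difference solves
\[
\pat w=-\Lambda^\alpha w+\chi\na\cdot\big(wB(u_1)+u_2B(w)\big)+rw-rw(u_1+u_2),\qquad w(0)=0.
\]

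First I would check $\pat w\in L^2(0,T;H^{-1})$: $\Lambda^\alpha w\in L^2(L^2)$ from $w\in L^2(H^\alpha)$; the reaction terms are in $L^2(L^2)$ via $u_i\in L^2(L^\infty)$; and $wB(u_1),\,u_2B(w)\in L^2(L^2)$, again by $u_i\in L^2(L^\infty)$ together with $B(u_1)\in L^2(H^{\alpha+1})\hookrightarrow L^2(L^\infty)$ and $B(w)\in L^\infty(H^1)$, so that $\na\cdot(\cdot)\in L^2(H^{-1})$. Since also $w\in L^2(H^\alpha)\subset L^2(H^1)$, the Lions--Magenes lemma gives $w\in C([0,T];L^2)$ and, for a.e.\ $t$,
\[
\tfrac12\tfrac{d}{dt}\|w\|_{L^2}^2+\|\Lambda^{\alpha/2}w\|_{L^2}^2=-\chi\!\int_{\TT^d}\!wB(u_1)\cdot\na w-\chi\!\int_{\TT^d}\!u_2B(w)\cdot\na w+r\|w\|_{L^2}^2-r\!\int_{\TT^d}\!w^2(u_1+u_2).
\]
The last term is nonpositive by $u_1,u_2\ge0$ and is discarded. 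A crucial point is that the dissipation $\|\Lambda^{\alpha/2}w\|_{L^2}^2$ does \emph{not} control $\|\na w\|_{L^2}$ since $\alpha<2$, so the two drift integrals must be reorganised by integration by parts rather than estimated directly.

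For the first, $-\chi\int wB(u_1)\cdot\na w=\tfrac\chi2\int(\na\cdot B(u_1))w^2=\tfrac\chi2\int\big(u_1+(\Delta-1)^{-1}u_1\big)w^2\le g_1(t)\|w\|_{L^2}^2$ with $g_1=\tfrac\chi2(\|u_1\|_{L^\infty}+C\|u_1\|_{L^2})\in L^2(0,T)$ (using $H^2\hookrightarrow L^\infty$ for $d\le2$). For the second, one more integration by parts gives
\[
-\chi\!\int u_2B(w)\cdot\na w=\chi\!\int(\na u_2\cdot B(w))w+\chi\!\int u_2\big(w+(\Delta-1)^{-1}w\big)w,
\]
and the last integral is $\le 2\chi\|u_2\|_{L^\infty}\|w\|_{L^2}^2$. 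For the remaining one I would combine $\|B(w)\|_{H^1}\le C\|w\|_{L^2}$ with Hölder: for $d=1$, $H^1\hookrightarrow L^\infty$ yields $\chi\int(\na u_2\cdot B(w))w\le C\chi\|\na u_2\|_{L^2}\|w\|_{L^2}^2$; for $d=2$, choosing $\tfrac1q=\tfrac{2-\alpha}2$, $\tfrac1p=\tfrac{\alpha-1}2$ (both finite since $1<\alpha<2$) and using $H^{\alpha-1}\hookrightarrow L^q$, $H^1\hookrightarrow L^p$ yields $\chi\int(\na u_2\cdot B(w))w\le C\chi\|\na u_2\|_{L^q}\|w\|_{L^2}^2$. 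In either case the prefactor lies in $L^2(0,T)$ since $u_2\in L^2(H^\alpha)$. Collecting everything, $\tfrac{d}{dt}\|w\|_{L^2}^2\le G(t)\|w\|_{L^2}^2$ with $G\in L^1(0,T)$; Grönwall and $w(0)=0$ force $w\equiv0$ on $(0,T)$.

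\textbf{Main obstacle.} It is precisely the drift term $-\chi\int u_2B(w)\cdot\na w$. Since the fractional dissipation is too weak ($\alpha/2<1$) to absorb $\na w$ in $L^2$, this term must be symmetrised, and the surviving piece $\int(\na u_2\cdot B(w))w$ can be closed only because $\alpha>1$ guarantees, in $d\le2$, both $u_i\in L^2(L^\infty)$ and $\na u_2\in L^2(H^{\alpha-1})$ with strictly positive smoothness — this is exactly where the hypothesis $\alpha>1$ is consumed. A lesser, purely technical point is the rigorous justification of the energy identity at this regularity, which I would handle by the Gelfand-triple/Lions--Magenes argument above, or equivalently by spatial mollification of $w$ followed by a limit passage.
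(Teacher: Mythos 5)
Your argument is correct, and it closes the key drift estimates by a genuinely different mechanism than the paper. The paper never integrates by parts fully: it writes $\nabla=\Lambda^{1-\alpha/2}R\,\Lambda^{\alpha/2}$, pairs $\Lambda^{\alpha/2}$ with the test function $\bu$, splits the remaining $\Lambda^{1-\alpha/2}$ between $u_2$ and $B(\bu)$ (terms $II$, $III$), and absorbs the resulting products into the dissipation $\epsilon\|\bu\|^2_{H^{\alpha/2}}$ via interpolation; the first drift term is likewise handled by interpolating $\|\bu\|^2_{L^{2p_1'}}$ between $L^2$ and $H^{\alpha/2}$. You instead integrate by parts completely, exploit the exact identity $\nabla\cdot B(f)=f+(\Delta-1)^{-1}f$, and control everything through $u_i\in L^2(L^\infty)$ (from $H^\alpha\hookrightarrow L^\infty$, $\alpha>1\ge d/2$) and $\|B(w)\|_{H^1}\le C\|w\|_{L^2}$ with the Hölder split $(q,p,2)$, $\tfrac1q=\tfrac{2-\alpha}{2}$, $\tfrac1p=\tfrac{\alpha-1}{2}$ in $d=2$ — so the dissipation term is never used to absorb anything. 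This is arguably cleaner and more elementary; what the paper's fractional-Leibniz route buys in exchange is a slightly asymmetric ("semi-strong") requirement, namely that one of the two solutions need only lie in $L^2(H^{(d-\alpha)/2})\cap L^\infty(L^2)$ rather than $L^2(H^\alpha)$, though within the class stated in the theorem both arguments apply. Your treatment of the rigorous justification (Lions--Magenes with $\pat w\in L^2(H^{-1})$, $w\in L^2(H^1)$) is a legitimate substitute for the paper's $\rho$-parametrised duality bookkeeping, and your discarding of the sign-definite logistic term matches the paper's use of nonnegativity.
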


\subsection{Discussion}
In Theorem \ref{thm1}, we prove the uniform-in-time boundedness of the solution to \eqref{eqDD}-\eqref{eqDD2} (regardless of the size of the initial data) when the dissipation strength lies in the regime
$$
\alpha>d\left(1-\frac{r}{\chi}\right). 
$$
In particular, the estimate \eqref{boundn1} in Theorem \ref{thm1} sharpens both \eqref{bound3} by excluding any (in particular, exponential) dependence on $T$ and \eqref{bound61} by removing the additional assumptions \eqref{eq:advS}. 
Let us also mention that we provide our results via a new and much shorter reasoning than that of \cite{BG4, burczak2016suppression}.

In Theorem \ref{thm2} and its Corollary \ref{cor:1} we prove some conditions (one of them depends on lower norms of $u_0$) that lead to the nonlinear stability of the homogeneous solution $u_\infty\equiv1$. Furthermore, Theorem \ref{thm2} also proves that the decay towards the equilibrium state $u_\infty$ is exponential with a explicitly computable rate. 

In Theorem \ref{thm2b}, we show for the case  $d=\alpha=1$ that $\chi< (8 \pi^2)^{-1}$ suffices for  $u_\infty \equiv 1$ to be asymptotically stable. Let us emphasize it is a phenomenon {independent of} $u_0$.

Let us compare our results with the previous ones in \cite{chaplain2016stability, galakhov2016parabolic, salako2017global}
\begin{itemize}
\item Both Theorems \ref{thm2} and \ref{thm2b} cover the case of (weaker) fractional dissipations $0<\alpha<2$, while the previous ones hold for the classical laplacian $\alpha=2$, but, on the other hand, some of the previous results are valid for an arbitrary space dimension.
\item Both our Theorems \ref{thm2} and \ref{thm2b} consider the case $r<\chi$, while the previous impose at best $2\chi<r$.
\item Theorem \ref{thm2} provides an exponential decay with a computable rate.
\end{itemize}

As far as we know, the available uniqueness results for Keller-Segel-type systems are standard ones, regarding uniqueness of classical solutions. Theorem \ref{thm3} indicates in particular that classical solutions are unique within any  $L^\infty (L^2) \cap L^2 (H^\alpha)$ solution, as long as $\alpha>1$. On one hand, it is a relaxation of standard classical uniqueness, but on the other hand it would be more natural to look for a weak-strong uniqueness result, with `weak' part related to certain simple global energy estimates. Since however actually these are no more than $L^1$, $LlogL$ ones (or slightly better with the logistic term, compare Lemma 3 of \cite{BG4}), a satisfactory weak-strong result remains open for now.

\section{Proof of Theorem \ref{thm1} (classical solvability and uniform-in-time bounds)}
The stated regularity
\[
u \in C (0, T; H^{d+2} (\TT^d)) \quad \cap \quad C^{2, 1} (\TT^d \times (0,T))
\]
for any finite $T$, follows from the main results of \cite{BG4, burczak2016suppression}. For nonnegative data $u_0$, the corresponding solution $u(t)$ is also nonnegative. Furthermore the solution to \eqref{eqDD3} satisfies 
\begin{equation}\label{eq:signv}
v \le 0.
\end{equation}
To realise that, it suffices to consider $x_t$ such that 
$$
v (t, x_t) = \max_{y} v (t, y),
$$
then we have
\[
0 \le u(x_t) = \Delta v(x_t) - v(x_t) \implies v(x_t) \le \Delta v(x_t) \le 0.
\]
Furthermore it holds
\begin{equation}\label{propv}
-\min_x u(x,t)\geq v(x,t)\geq -\max_x u(x,t).
\end{equation}
Therefore, to conclude Theorem \ref{thm1} it suffices to prove the uniform estimate \eqref{boundn1}. Define
\begin{equation}\label{R1}
\mathcal{R}_1(u_0,d)=\max\left\{(\|u_0\|_{L^1(\TT^d)},(2\pi)^d\right\}
\end{equation}
Then the solution $u$ verifies the following estimates
\begin{align}
\sup_{0\leq t<\infty}\|u(t)\|_{L^1(\TT^d)}&\leq \mathcal{R}_1(u_0,d), \label{LinfL1}\\
\limsup_{t\rightarrow\infty}\|u(t)\|_{L^1(\TT^d)}&\leq (2\pi)^d, \label{LinfL12}\\
\int_0^T\|u(s)\|_{L^2(\TT^d)}^2ds&\leq \frac{\|u_0\|_{L^1(\TT^d)}}{r}+\mathcal{R}_1(u_0,d)T, \label{L2L2}
\end{align}
The estimate  \eqref{L2L2} follows for  $d=2$ from \cite{burczak2016suppression}, Lemma 4.3 and for  $d=1$ from \cite{BG4}, Lemma 4. To justify the estimates \eqref{LinfL1},  \eqref{LinfL12}, we use for $\eta (t) = \|u(t)\|_{L^1(\TT^d)}$ the (Bernoulli) ODI
\begin{equation}\label{odi}
\frac{d}{dt} \eta (t)  \leq r \eta (t) - r(2\pi)^{-d} \eta^2 (t)
\end{equation}
following from integration in space of \eqref{eqDD} and the Jensen inequality. Introducing $\kappa$ through $1= \kappa (\delta + \eta)$ we obtain after $\delta \to 0$ that
\[
\eta (T) \le \frac{\eta (0) }{e^{-rT} + \frac{r(2\pi)^{-d}}{r} (1-e^{-rT}) \eta (0)}.
\]
Hence
\begin{equation}
\|u(T)\|_{L^1(\TT^d)} \le \frac{  \|u_0\|_{L^1(\TT^d)}}{e^{-rT} + (2\pi)^{-d} (1-e^{-rT})  \|u_0\|_{L^1(\TT^d)}}.  \label{LinfL3}
\end{equation}
Considering in  \eqref{LinfL3} large $T$ implies \eqref{LinfL12}, while the uniform bound for r.h.s. of  \eqref{LinfL3} implies \eqref{LinfL1}.


We will find useful the next lemma stating the uniform-in-time boundedness in some $L^p$ norm for $p$ very close to 1. For its formulation, let us recall that $\mathscr{P}(d,\alpha)$ comes  from Lemma \ref{lemapoincare}
and let us define
\begin{equation}\label{R2}
\mathcal{R}_2 = \mathcal{R}_2(r,\epsilon,\chi,d,\alpha, u_0)= \left(\frac{ r\left(\frac{r}{\epsilon}\frac{\chi}{2\chi-r+\epsilon}\right)^{\frac{\chi}{\chi-r+\epsilon}}}{\mathscr{P}(d,\alpha)}+\frac{\mathcal{R}_1(u_0,d)^2}{(2\pi)^d}+\mathcal{R}_1(u_0,d)\right)^
{1-\frac{r-\epsilon}{\chi}},
\end{equation}
with $\mathcal{R}_1(u_0,d)$ defined in \eqref{R1}, as well as recall that $\mathcal{\tilde R}_2$ is defined as
\begin{equation}\label{R2t}
\mathcal{\tilde R}_2 = \mathcal{\tilde R}_2(r,\epsilon,\chi,d,\alpha)=  \left(\frac{ r\left(\frac{r}{\epsilon}\frac{\chi}{2\chi-r+\epsilon}\right)^{\frac{\chi}{\chi-r+\epsilon}}}{\mathscr{P}(d,\alpha)}+ 3 (2\pi)^d \right)^
{1-\frac{r-\epsilon}{\chi}}.
\end{equation}

Observe the last quantity is data-independent.
\begin{lem}\label{keylemma}
Let $d=1$ or $2$ and
\[
u \in C (0, T; H^{d+2} (\TT^d)) \quad \cap \quad C^{2, 1} (\TT^d \times (0,T))
\] solve \eqref{eqDD}-\eqref{eqDD2} starting from nonnegative $u_0\in H^{d+2}$. Assume that $\chi>r$, fix any $0<\epsilon<r$. Then
\begin{equation}\label{eqLp}
\max_{0\leq t <\infty}\|u\|_{L^{\frac{\chi}{\chi-r+\epsilon}}(\TT^d)}\leq  \|u_0\|_{L^{\frac{\chi}{\chi-r+\epsilon}}} e^{- \mathscr{P}(d,\alpha) t } + (1- e^{- \mathscr{P}(d,\alpha) t })  \mathcal{R}_2(r,\epsilon,\chi,d,\alpha, u_0).
\end{equation}
Furthermore, we have that
\begin{equation}\label{limsupLpa}
\max_{r^{-1} \ln2 \leq t <\infty} \|u(t)\|_{L^{\frac{\chi}{\chi-r+\epsilon}}} \le \|u({t_0= r^{-1} \ln2})\|_{L^{\frac{\chi}{\chi-r+\epsilon}}} e^{- \mathscr{P}(d,\alpha) t } + (1- e^{- \mathscr{P}(d,\alpha) t })\mathcal{\tilde R}_2(r,\epsilon,\chi,d,\alpha),
\end{equation}
which gives in particular that 
\begin{equation}\label{limsupLp}
\limsup_{t \to \infty} \|u(t)\|_{L^{\frac{\chi}{\chi-r+\epsilon}}} \le \mathcal{\tilde R}_2(r,\epsilon,\chi,d,\alpha).
\end{equation}
\end{lem}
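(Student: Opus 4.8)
The plan is to run a standard $L^p$-energy estimate on the active scalar formulation $\pat u = -\Lambda^\alpha u + \chi \nabla\cdot(u B(u)) + r u(1-u)$, tracking all constants, with $p = \frac{\chi}{\chi-r+\epsilon}$ chosen precisely so that the transport term $\chi\nabla\cdot(uB(u))$ is controllable by the good terms. First I would multiply the equation by $p u^{p-1}$ and integrate over $\TT^d$. The diffusive term yields a nonnegative contribution $p\int u^{p-1}\Lambda^\alpha u\,dx \ge \frac{4(p-1)}{p}\int (\Lambda^{\alpha/2} u^{p/2})^2\,dx$ by the fractional Stroock–Varopoulos (or Córdoba–Córdoba pointwise) inequality; this I would then bound below via the Poincaré-type inequality of Lemma \ref{lemapoincare} applied to $u^{p/2}$, producing a term $\mathscr{P}(d,\alpha)\big(\|u\|_{L^p}^p - (2\pi)^{-d}\|u^{p/2}\|_{L^1}^2\big)$ — note $\|u^{p/2}\|_{L^1}$ is essentially $\|u\|_{L^{p/2}}^{p/2}$ and, since $p/2 < 1$, interpolates into $\|u\|_{L^1}$, which is already uniformly controlled by \eqref{LinfL1}. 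For the transport term, integrating by parts gives $-\chi(p-1)\int u^{p-1}\nabla u \cdot B(u)\,dx = -\frac{\chi(p-1)}{p}\int \nabla(u^p)\cdot B(u)\,dx = \frac{\chi(p-1)}{p}\int u^p\, \nabla\cdot B(u)\,dx$, and since $B(u) = \nabla(\Delta-1)^{-1}u$ we have $\nabla\cdot B(u) = \Delta(\Delta-1)^{-1}u = u + (\Delta-1)^{-1}u = u - (1-\Delta)^{-1}u$, so this contributes at most $\frac{\chi(p-1)}{p}\int u^{p+1}\,dx$ after discarding the favourable lower-order piece. Finally the logistic term contributes $rp\int u^p - rp\int u^{p+1}$.

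Collecting everything, the coefficient in front of $\int u^{p+1}\,dx$ is $\frac{\chi(p-1)}{p} - rp$, and here the choice $p = \frac{\chi}{\chi-r+\epsilon}$ is made so that $\chi(p-1) = \chi\cdot\frac{r-\epsilon}{\chi-r+\epsilon} = (r-\epsilon)p$, hence the $u^{p+1}$ coefficient becomes $\frac{(r-\epsilon)p}{p} - rp = -\epsilon p < 0$ — the superlinear term is strictly dissipative. Writing $\phi(t) = \|u(t)\|_{L^p}^p$, I would arrive at a differential inequality of the form
\[
\frac{d}{dt}\phi \le -\mathscr{P}(d,\alpha)\,\phi + C_1 - \epsilon p \int_{\TT^d} u^{p+1}\,dx + (\text{terms controlled by }\|u\|_{L^1}),
\]
and then use Jensen/Hölder, $\int u^{p+1} \ge (2\pi)^{-d}\big(\int u^p\big)^{(p+1)/p} = (2\pi)^{-d}\phi^{1+1/p}$, together with the elementary bound $rp\phi - \epsilon p (2\pi)^{-d}\phi^{1+1/p} \le rp\big(\frac{r}{\epsilon}\frac{\chi}{2\chi-r+\epsilon}(2\pi)^{d}\big)^{p}\cdot(2\pi)^{-d}$ type maximisation to absorb the remaining superlinear-vs-linear competition into a constant. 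This produces a clean Bernoulli/Gronwall-type inequality $\frac{d}{dt}\phi \le -\mathscr{P}(d,\alpha)\phi + \mathscr{P}(d,\alpha)\mathcal{R}_2^{p/(1-(r-\epsilon)/\chi)}$ — more precisely, after raising to the appropriate power $1-\frac{r-\epsilon}{\chi} = 1/p'$ relation, one reads off exactly the constant $\mathcal{R}_2$ defined in \eqref{R2}, whose three summands correspond respectively to the maximised logistic/transport competition term divided by $\mathscr{P}(d,\alpha)$, the $\|u\|_{L^1}^2$-type Poincaré remainder, and the linear $\|u\|_{L^1}$ remainder. Integrating the linear ODE comparison yields \eqref{eqLp}.

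For \eqref{limsupLpa} I would simply restart the same estimate from time $t_0 = r^{-1}\ln 2$, at which point \eqref{LinfL12}–\eqref{LinfL3} guarantee $\|u(t_0)\|_{L^1} \le 2(2\pi)^d$ (since $e^{-rt_0} = 1/2$ makes the denominator in \eqref{LinfL3} at least $\frac12(2\pi)^{-d}\|u_0\|_{L^1}$... or rather one checks the elementary bound directly), so that the data-dependent pieces $(2\pi)^{-d}\mathcal{R}_1^2 + \mathcal{R}_1$ in $\mathcal{R}_2$ get replaced by $3(2\pi)^d$, giving $\mathcal{\tilde R}_2$ of \eqref{R2t}; then \eqref{limsupLp} is immediate by letting $t\to\infty$ in \eqref{limsupLpa}. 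The main obstacle I anticipate is purely bookkeeping: pinning down every constant so that the final expression matches \eqref{R2} verbatim — in particular correctly tracking the exponent $1-\frac{r-\epsilon}{\chi}$ (which is $1/p'$, the conjugate relation to $p$), verifying that $\mathscr{P}(d,\alpha)$ from Lemma \ref{lemapoincare} is exactly the decay rate (i.e. that the Poincaré constant for $u^{p/2}$ does not degenerate as $p\to 1^+$, which is why $\mathscr{P}$ is stated $p$-independently), and making sure the $u^{p/2}\in L^1$ interpolation remainder is genuinely absorbed rather than merely bounded. There is no conceptual difficulty beyond the choice of $p$; the content is entirely in the constant-chasing.
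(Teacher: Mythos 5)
Your overall strategy coincides with the paper's: an $L^p$ estimate with $p=\frac{\chi}{\chi-r+\epsilon}$ chosen so that the aggregation and logistic superlinear terms combine into $-\epsilon p\int u^{p+1}$ (via $\Delta v = u+v\le u$, i.e.\ $v\le 0$), a Poincar\'e-type lower bound on the dissipation, a maximisation of the residual linear-versus-superlinear competition into a constant, a linear ODI for $\|u\|_{L^p}^p$ raised to the power $1/p=1-\frac{r-\epsilon}{\chi}$ using $(1-e^{-\mathcal{A}t})^{1/p}\le 1-e^{-\mathcal{A}t/p}$, and a restart of the same ODI at $t_0=r^{-1}\ln 2$, where \eqref{LinfL3} gives $\|u(t_0)\|_{L^1}\le 2(2\pi)^d$, to obtain the data-independent bound \eqref{limsupLpa} and hence \eqref{limsupLp}. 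All of that matches the paper.

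The one step that does not survive scrutiny is your treatment of the dissipative term. You propose to pass through Stroock--Varopoulos, $p\int u^{p-1}\Lambda^\alpha u\,dx\ge \frac{4(p-1)}{p}\int|\Lambda^{\alpha/2}u^{p/2}|^2dx$, and only then apply a Poincar\'e inequality to $u^{p/2}$. This inserts the factor $\frac{4(p-1)}{p^2}\le 1$ in front of the coercive term, so the decay rate you obtain for $\|u\|_{L^p}$ is $\frac{4(p-1)}{p^2}\,\mathscr{P}(d,\alpha)$ rather than $\mathscr{P}(d,\alpha)$; since $p\to 1^+$ (i.e.\ $s=p-1\to 0$) as $\epsilon\to r^-$, this factor can be arbitrarily small, and the constant replacing $\mathcal{R}_2$ inflates accordingly. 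The degeneration you should worry about is thus in the Stroock--Varopoulos constant, not in the Poincar\'e constant, and no amount of bookkeeping recovers \eqref{eqLp} with the rate and constant as stated. The paper avoids the detour entirely: Lemma \ref{lemapoincare} is formulated directly for $\int_{\TT^d}\Lambda^\alpha u\,u^s\,dx$ for every $s>0$, i.e.\ it already is the Poincar\'e inequality for the exact quantity you start from, with the $p$-independent constant $\mathscr{P}(d,\alpha)$ and the remainder $\frac{\mathscr{P}(d,\alpha)}{(2\pi)^d}\left(\int u\right)\left(\int u^s\right)$, which is then controlled through $\|u\|_{L^1}$ and $y^s\le y+1$. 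Replace your two-step dissipation bound by a direct application of that lemma with $s=p-1$ and the rest of your plan goes through (modulo minor factor-of-$p$ slips in the integration by parts of the transport term, which genuinely are only bookkeeping).
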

\begin{proof}For an $s>0$ (to be fixed below), we compute
\begin{align*}
\frac{1}{1+s}\frac{d}{dt}\|u\|_{L^{1+s}(\TT^d)}^{1+s}+\int_{\TT^d} u^s(x)\Lambda^{\alpha}u(x)dx\leq\left(\chi\frac{s}{1+s}-r\right)\int_{\TT^d} u^{2+s}(x)dx+r\|u\|_{L^{1+s}(\TT^d)}^{1+s},
\end{align*}
where $\Delta v = u + v \le u $ was used, see \eqref{eq:signv}.

Using Lemma \ref{lemapoincare}, we find that
\begin{align*}
\frac{1}{1+s}\frac{d}{dt}\|u\|_{L^{1+s}(\TT^d)}^{1+s}+\mathscr{P}(d,\alpha)\|u\|_{L^{1+s}(\TT^d)}^{1+s}&\leq \left(\chi\frac{s}{1+s}-r\right)\|u\|_{L^{2+s}(\TT^d)}^{2+s}+r\|u\|_{L^{1+s}(\TT^d)}^{1+s}\\
&\quad+\frac{\mathscr{P}(d,\alpha)}{(2\pi)^d}\|u\|_{L^1(\TT^d)}\left(\int_{\TT^d}u^s(x)dx\right),
\end{align*}
with $\mathscr{P}(d,\alpha)$ the constant in Lemma \ref{lemapoincare}.
We fix $\epsilon$ such that $0<\epsilon<r$. Utilizing the bounds
\begin{align*}
ry^{1+s}-\epsilon y^{2+s}&\leq r\left(\frac{r(1+s)}{\epsilon(2+s)}\right)^{1+s}-\epsilon \left(\frac{r(1+s)}{\epsilon(2+s)}\right)^{2+s}\leq r\left(\frac{r(1+s)}{\epsilon(2+s)}\right)^{1+s}\;\forall\, y\geq0,\;\epsilon>0,
\end{align*}
$$
y^s\leq y+1\;\forall\, y\geq0,\;0<s\leq 1,
$$
we have that
\begin{align*}
\frac{1}{1+s}\frac{d}{dt}\|u\|_{L^{1+s}}^{1+s}+\mathscr{P}(d,\alpha)\|u\|_{L^{1+s}}^{1+s}&\leq \left(\chi\frac{s}{1+s}-r+\epsilon\right)\|u\|_{L^{2+s}}^{2+s}+r\left(\frac{r(1+s)}{\epsilon(2+s)}\right)^{1+s}\\
&\quad+\frac{\mathscr{P}(d,\alpha)}{(2\pi)^d}\mathcal{R}_1(u_0,d)\left(\mathcal{R}_1(u_0,d)+(2\pi)^d\right).
\end{align*}
Let us define
$$
s=\frac{r-\epsilon}{\chi-r+\epsilon}. 
$$
Recall that $\chi>r$, so $s>0$ and
$$
\frac{s}{1+s}=\frac{r-\epsilon}{\chi},\;r\left(\frac{r(1+s)}{\epsilon(2+s)}\right)^{1+s}=r\left(\frac{r}{\epsilon}\frac{\chi}{2\chi-r+\epsilon}\right)^{\frac{\chi}{\chi-r+\epsilon}}. 
$$
We obtain that
\begin{align*}
\frac{1}{\frac{\chi}{\chi-r+\epsilon}}\frac{d}{dt}\|u\|_{L^{\frac{\chi}{\chi-r+\epsilon}}}^{\frac{\chi}{\chi-r+\epsilon}}+\mathscr{P}(d,\alpha)\|u\|_{L^{\frac{\chi}{\chi-r+\epsilon}}}^{\frac{\chi}{\chi-r+\epsilon}}&\leq \frac{\mathscr{P}(d,\alpha)}{(2\pi)^d}\mathcal{R}_1(u_0,d)\left(\mathcal{R}_1(u_0,d)+(2\pi)^d\right)\\
&\quad+ r\left(\frac{r}{\epsilon}\frac{\chi}{2\chi-r+\epsilon}\right)^{\frac{\chi}{\chi-r+\epsilon}}.
\end{align*}
The previous ODI can be written as
$$
\frac{d}{dt}Y(t)+\mathcal{A}Y(t)\leq \mathcal{B},
$$
for
$$
Y(t)=\|u(t)\|_{L^{\frac{\chi}{\chi-r+\epsilon}}(\TT^d)}^{\frac{\chi}{\chi-r+\epsilon}},
$$
$$
\mathcal{A}=\frac{\chi}{\chi-r+\epsilon}\mathscr{P}(d,\alpha)
$$
and
$$
\mathcal{B}=\frac{\chi}{\chi-r+\epsilon}\left( r\left(\frac{r}{\epsilon}\frac{\chi}{2\chi-r+\epsilon}\right)^{\frac{\chi}{\chi-r+\epsilon}}+\frac{\mathscr{P}(d,\alpha)}{(2\pi)^d}\mathcal{R}_1(u_0,d)\left(\mathcal{R}_1(u_0,d)+(2\pi)^d\right)\right). 
$$
Integrating in time, we find that
$$
Y(t)\leq Y(0)e^{-\mathcal{A}t}+\frac{\mathcal{B}}{\mathcal{A}}\left(1-e^{-\mathcal{A}t}\right) 
$$
hence
\[
\|u(t)\|_{L^{\frac{\chi}{\chi-r+\epsilon}}(\TT^d)} = Y^{1-\frac{r-\epsilon}{\chi}} (t) \leq Y^{1-\frac{r-\epsilon}{\chi}} (0)  e^{-\mathcal{A} ({1-\frac{r-\epsilon}{\chi}} )t}+\left(\frac{\mathcal{B}}{\mathcal{A}} \right)^{1-\frac{r-\epsilon}{\chi}}  \left(1-e^{-\mathcal{A}t}\right)^{1-\frac{r-\epsilon}{\chi}} 
\]
using that $ \left(1-e^{-\mathcal{A}t}\right)^{1-\frac{r-\epsilon}{\chi}}  \le 1-e^{-\mathcal{A}({1-\frac{r-\epsilon}{\chi}} )t}$ and the definitions of  $\mathcal{B}$ and $\mathcal{A}$ we arrive at
\[
\|u(t)\|_{L^{\frac{\chi}{\chi-r+\epsilon}}} \le \|u_0\|_{L^{\frac{\chi}{\chi-r+\epsilon}}} e^{- \mathscr{P}(d,\alpha) t } + (1- e^{- \mathscr{P}(d,\alpha) t })  \left(\frac{ r\left(\frac{r}{\epsilon}\frac{\chi}{2\chi-r+\epsilon}\right)^{\frac{\chi}{\chi-r+\epsilon}}}{\mathscr{P}(d,\alpha)}+\frac{\mathcal{R}_1(u_0,d)^2}{(2\pi)^d}+\mathcal{R}_1(u_0,d)\right)^
{1-\frac{r-\epsilon}{\chi}} 
\]
which is  \eqref{eqLp}.

Recall \eqref{LinfL3}. It implies that for any $t \ge r^{-1} \ln2$ 
$$
\|u(t)\|_{L^1}\leq 2 (2\pi)^d.
$$
We can consider our ODI not starting at the initial time, but at $t= r^{-1} \ln2$. This implies, along the lines leading to \eqref{eqLp}, the inequality \eqref{limsupLpa}.
\end{proof}
Now we can proceed with the proof of Theorem \ref{thm1} i.e. with showing the uniform estimate \eqref{boundn1}. We define $x_t$ such that
$$
\max_{y\in\TT^d} u(y,t)=u(x_t,t)=\|u(t)\|_{L^\infty(\TT^d)}.
$$
Then, due to regularity of the solution $u$, we have that $\|u(t)\|_{L^\infty(\TT^d)}$ is Lipschitz:
\[
|u(x_s,s) - u(x_t,t)| = \begin{cases} u(x_s,s) - u(x_t,t) \le u(x_s,s) - u(x_s,t) \\
 u(x_t,t) - u(x_s,s) \le  u(x_t,t) - u(x_t,s)
 \end{cases}
 \begin{rcass}
 \end{rcass}
   \sup_{y \in [x_t,x_s]^d,\tau \in [t,s]} |\partial_\tau u (\tau, y)| |s-t|.
\]
Hence due to the Rademacher theorem $\|u(t)\|_{L^\infty(\TT^d)}$ is differentiable almost everywhere. Moreover, its derivative verifies for almost every $t$
$$
\frac{d}{dt}\|u(t)\|_{L^\infty(\TT^d)} \le (\partial_t u)(x_t,t),
$$
the precise argument for the above may be found for instance in Cordoba \& Cordoba  \cite{cor2}, p.522.
In what follows we will use a few arguments based on a strict inequality for a pointwise value of $\frac{d}{dt}\|u(t)\|_{L^\infty(\TT^d)}$ at, say, $t^*$. Since it is in fact defined only almost everywhere in time, such an inequality should be understood as the inequality for $\int_{t^*}^{t^*+\delta}\frac{d}{dt}\|u(t)\|_{L^\infty(\TT^d)} dt$.
Let us fix $0<\epsilon<r$ such that
$$
\alpha>d\left(1-\frac{r-\epsilon}{\chi}\right).
$$

Then, due to Lemma \ref{keylemma}, we have that
\begin{equation}\label{eq:lem1r}
\begin{aligned}
\max_{ t \ge 0}\|u\|_{L^{\frac{\chi}{\chi-r+\epsilon}}(\TT^d)} &\leq  \|u_0\|_{L^{\frac{\chi}{\chi-r+\epsilon}}} e^{- \mathscr{P}(d,\alpha) t } + (1- e^{- \mathscr{P}(d,\alpha) t })  \mathcal{R}_2 \equiv  \mathcal{Q}_2, \\
\max_{t \ge r^{-1} \ln2} \|u(t)\|_{L^{\frac{\chi}{\chi-r+\epsilon}}} &\le \|u({t_0= r^{-1} \ln2})\|_{L^{\frac{\chi}{\chi-r+\epsilon}}} e^{- \mathscr{P}(d,\alpha) t } + (1- e^{- \mathscr{P}(d,\alpha) t })\mathcal{\tilde R}_2  \equiv  \mathcal{\tilde Q}_2\end{aligned}
\end{equation}
with $u_0$ depending $\mathcal{R}_2$ defined in \eqref{R2} and $u_0$ independent $\mathcal{R}_2$ defined in \eqref{R2t}. Let us take
$$
p=\frac{\chi}{\chi-r+\epsilon}
$$
and with this choice consider the dichotomy of Lemma  \ref{lemaaux3}. It implies that  either
\begin{equation}
u(x_t,t) \le \tag{A} \mathscr{M}_1(d,p,\alpha)\|u(t)\|_{L^p} \le \mathscr{M}_1(d,p,\alpha)\mathcal{Q}_2
\end{equation}
or
\begin{equation}
\tag{B}
\Lambda^\alpha u(x_t,t)\geq \mathscr{M}_2(d,p,\alpha)\frac{u(x_t,t)^{1+\alpha p/d}}{\|u(t)\|^{\alpha p/d}_{L^p}}.
\end{equation}
Let us introduce
\[\sigma = \frac{\alpha}{d}\frac{\chi}{\chi-r+\epsilon}-1\qquad K=\frac{\mathscr{M}_2(d,p,\alpha)}{\mathcal{Q}^{\frac{\alpha}{d}\frac{\chi}{\chi-r+\epsilon} }_2}, \qquad \tilde K=\frac{\mathscr{M}_2(d,p,\alpha)}{\mathcal{\tilde Q}^{\frac{\alpha}{d}\frac{\chi}{\chi-r+\epsilon} }_2 (r,\epsilon,\chi,d,\alpha)}  \] 
Assume now that over the evolution of $\|u(t)\|_{L^\infty(\TT^d)}$ it may take values greater than
\[
 \mathscr{M}_3 = \max\left\{ 2\|u_0\|_{L^\infty(\TT^d)} e^{-t},  2\mathscr{M}_1(d,p,\alpha)\mathcal{Q}_2, \left( \frac{4\chi}{K} \right)^{\frac{1}{2} + \frac{1}{\sigma}}, 1 \right\}
\]
Let us consider the first $t^*>0$ such that
$$
\|u(t^*)\|_{L^\infty}= \mathscr{M}_3 .
$$
Observe that $t_*>0$ thanks to the first entry of the formula for $ \mathscr{M}_3 $.  Since $\mathscr{M}_3$ exceeds (by the middle entry of its definition) the bound related to the case (A), we find ourselves at the case (B). Consequently
\[
\frac{d}{dt}\|u(t_*)\|_{L^\infty(\TT^d)} \leq (\chi-r)\|u(t_*)\|_{L^\infty(\TT^d)}^2+r\|u(t_*)\|_{L^\infty(\TT^d)}- \mathscr{M}_2(d,p,\alpha)\frac{\|u(t_*)\|_{L^\infty(\TT^d)}^{1+\alpha p/d}}{\|u(t_*)\|^{\alpha p/d}_{L^p}}.
\]
Hence via \eqref{eq:lem1r}
\begin{equation}\label{eqM2}
\begin{aligned}
\frac{d}{dt}\|u(t_*)\|_{L^\infty(\TT^d)}&\leq (\chi-r)\|u(t_*)\|_{L^\infty(\TT^d)}^2+r\|u(t_*)\|_{L^\infty(\TT^d)}-\frac{\mathscr{M}_2(d,p,\alpha)}{\mathcal{Q}^{\frac{\alpha}{d}\frac{\chi}{\chi-r+\epsilon} }_2 (r,\epsilon,\chi,d,\alpha, u_0)}\|u(t_*)\|_{L^\infty(\TT^d)}^{1+\frac{\alpha}{d}\frac{\chi}{\chi-r+\epsilon}}\\
&= (\chi-r)  \mathscr{M}_3^2 + r \mathscr{M}_3- K \mathscr{M}_3^{2+\sigma} \le \chi  \mathscr{M}_3^2 - K \mathscr{M}_3^{2+\sigma} 
\end{aligned}
\end{equation}
since $\mathscr{M}_3 \ge 1$. Observing that $\sigma \in (0, 1]$
with $ \chi  \mathscr{M}_3^2 \le \frac{K}{2}  \mathscr{M}_3^{2+\sigma} + \frac{\sigma \chi}{2 (1+ \frac{\sigma}{2})^{1 + \frac{2}{\sigma}}} \left(\frac{2 \chi}{K} \right)^\frac{2}{\sigma}$ yields
$$
\frac{d}{dt}\|u(t_*)\|_{L^\infty(\TT^d)} \le \frac{\sigma \chi}{2 (1+ \frac{\sigma}{2})^{1 + \frac{2}{\sigma}}} \left(\frac{2 \chi}{K} \right)^\frac{2}{\sigma} -  \frac{K}{2}  \mathscr{M}_3^{2+\sigma} \le  \frac{ \chi}{2} \left(\frac{2 \chi}{K} \right)^\frac{2}{\sigma} -  \frac{K}{2}  \mathscr{M}_3^{2},
$$
where we have used $\sigma \le 1$. Our choice of $\mathscr{M}_3$  (see the third entry of its definition)  gives hence
\[
\frac{d}{dt}\|u(t_*)\|_{L^\infty(\TT^d)} < 0,
\]
which is against the assumption that $t_*$ would be a first time when $\|u(t)\|_{L^\infty(\TT^d)}$ takes the value $\mathscr{M}_3$. Consequently
\[
\sup_{t \ge 0} \|u(t)\|_{L^\infty(\TT^d)} \le \mathscr{M}_3.
\]
Let us observe that, analogously to the proof of \eqref{limsupLpa} of Lemma \ref{keylemma}, we can obtain that
\[
\sup_{t \ge r^{-1} \ln 2}  \|u(t)\|_{L^\infty(\TT^d)} \le \mathscr{\tilde M}_3
\]
with
\[
 \mathscr{\tilde M}_3 = \max\left\{ 2\|u({t_0= r^{-1} \ln 2}) \|_{L^\infty(\TT^d)} e^{-t},  2\mathscr{M}_1(d,p,\alpha)\mathcal{\tilde Q}_2, \left( \frac{4\chi}{\tilde K} \right)^{\frac{1}{2} + \frac{1}{\sigma}}, 1 \right\}
\]
In order to simplify the formula for $ \mathscr{ M}_3 $ we can write
\[
 \mathscr{ M}_3 \le 2 \left( \|u_0\|_{L^\infty(\TT^d)} e^{-t}+ (\mathscr{M}_1+1) \mathcal{Q}_2+ {\left(\frac{4 \chi}{\mathscr{M}_2}\right)}^{ \frac{1}{2} +\frac{1}{\sigma}}\mathcal{Q}^{\frac{3}{\sigma}}_2 \right) \le 2 e^{-t} \|u_0\|_{L^\infty(\TT^d)} +2 \mathcal{Q}^{\frac{3}{\sigma}}_2 (\mathscr{M}_1+ {\left(\frac{4 \chi}{\mathscr{M}_2}\right)}^{ \frac{1}{2} +\frac{1}{\sigma}} +1 )
\]
and analogously 
\[
 \mathscr{ \tilde M}_3  \le 2 e^{-t}  \|u({t_0= r^{-1} \ln 2}) \|_{L^\infty(\TT^d)} +2 \mathcal{\tilde Q}^{\frac{3}{\sigma}}_2 (\mathscr{M}_1+ {\left(\frac{4 \chi}{\mathscr{M}_2}\right)}^{ \frac{1}{2} +\frac{1}{\sigma}} +1 ).
\]
These are the bounds  $\mathscr{ R}_3,  \mathscr{ \tilde R}_3$ from the thesis of Theorem \ref{thm1}, with the exception that $\mathcal{R}_2$ of  $\mathcal{Q}_2$, see \eqref{eq:lem1r}, is substituted with its upper bound $\mathcal{R}_0$, thus new $\mathcal{Q}_0$.  Theorem \ref{thm1} is proved.

\section{Proof of Theorem \ref{thm2} (stability of the homogeneous solution I) and corollary}
Let us define the new variables
$$
U=u-1,\;V=v+1.
$$
These new variables solve
\begin{align}\label{eqU}
\pat U&=- \Lambda^\alpha U+\chi\nabla\cdot((U+1) \nabla V)-r(U+1)U,\text{ in }(x,t)\in \TT^d\times[0,\infty)\\
\Delta V -V&=U,\text{ in }(x,t)\in \TT^d\times[0,\infty)\label{eqV}\\
U(x,0)&=u_0(x)-1\text{ in }x\in \TT^d.\label{eqU2}
\end{align}
Furthermore,  \eqref{LinfL1}  and \eqref{LinfL12} imply
$$
\|U(t)\|_{L^1}\leq (2\pi)^d+\max\left\{(\|u_0\|_{L^1(\TT^d)},(2\pi)^d\right\}.
$$
Let us define
$$
\overline{U}(t)=U(\overline{x}_t,t)=\max_x U(x,t),
$$
$$
\underline{U}(t)=U(\underline{x}_t,t)=\min_x U(x,t).
$$
Due to \eqref{boundn1} 
$$
-1\leq U(x,t)\leq (1+\delta)  \mathscr{ R}_3 (t; r,\epsilon,\chi,d,\alpha, u_0)  -1,
$$
$$
-\overline{U}(t)\leq V(x,t)\leq -\underline{U}(t).
$$
Using the pointwise method already described in the proof of Theorem \ref{thm1}, we have consequently that
\begin{align*}
\frac{d}{dt}\overline{U}(t)&= -\Lambda^{\alpha}U(\overline{x}_t,t)+\chi(\overline{U}(t)+1)(\overline{U}(t)+V(\overline{x}_t,t))-r(\overline{U}(t)+1)\overline{U}(t)\\
&\leq-\Lambda^{\alpha}U(\overline{x}_t,t)+(\chi-r)\overline{U}(t)^2 +(\chi-r)\overline{U}(t)-\chi\overline{U}(t)\underline{U}(t)-\chi\underline{U}(t),
\end{align*}
\begin{align*}
\frac{d}{dt}\underline{U}(t)&= -\Lambda^{\alpha}U(\underline{x}_t,t)+\chi(\underline{U}(t)+1)(\underline{U}(t)+V(\underline{x}_t,t))-r(\underline{U}(t)+1)\underline{U}(t)\\
&\geq-\Lambda^{\alpha}U(\underline{x}_t,t)+(\chi-r)\underline{U}(t)^2 +(\chi-r)\underline{U}(t)-\chi\overline{U}(t)\underline{U}(t)-\chi\overline{U}(t).
\end{align*}
Collecting both estimates, we obtain
\begin{align*}
\frac{d}{dt}\left(\overline{U}(t)-\underline{U}(t)\right)&\leq -\Lambda^{\alpha}U(\overline{x}_t,t)+\Lambda^{\alpha}U(\underline{x}_t,t)
+(\chi-r)\left(\overline{U}(t)^2 -\underline{U}(t)^2\right)\\
&\quad+(\chi-r)\left(\overline{U}(t)-\underline{U}(t)\right)+\chi\left(\overline{U}(t)-\underline{U}(t)\right)\\
&\leq -\Lambda^{\alpha}U(\overline{x}_t,t)+\Lambda^{\alpha}U(\underline{x}_t,t)
+\left(\overline{U}(t)-\underline{U}(t)\right)\left[2\chi-r+(\chi-r)\left(\overline{U}(t)+\underline{U}(t)\right)\right].
\end{align*}
Now let us note that
\begin{align*}
\Lambda^{\alpha}U(\overline{x}_t,t)&\geq\mathscr{C}_{d,\alpha}\text{P.V.}\int_{\TT^d}\frac{u(\overline{x}_t,t)-u(\overline{x}_t-y,t)dy}{|y|^{d+\alpha}}\\
&\geq\mathscr{C}_{d,\alpha}\text{P.V.}\int_{\TT^d}\frac{u(\overline{x}_t,t)-u(\overline{x}_t-y,t)dy}{({2}\pi\sqrt{d})^{d+\alpha}}\\
&\geq \mathscr{C}_{d,\alpha}\frac{(2\pi)^d\overline{U}(t)-\int_{\TT^d}U(y,t)dy}{({2}\pi\sqrt{d})^{d+\alpha}}.
\end{align*}
Similarly
$$
-\Lambda^{\alpha}U(\underline{x}_t,t)\geq\mathscr{C}_{d,\alpha}\text{P.V.}\int_{\TT^d}\frac{-u(\underline{x}_t,t)+u(\underline{x}_t-y,t)dy}{|y|^{d+\alpha}}\geq \mathscr{C}_{d,\alpha}\frac{-(2\pi)^d\underline{U}(t)+\int_{\TT^d}U(y,t)dy}{({2}\pi\sqrt{d})^{d+\alpha}}.
$$
Thus
\begin{align*}
\frac{d}{dt}\left(\overline{U}(t)-\underline{U}(t)\right)&\leq -\mathscr{C}_{d,\alpha}\frac{(2\pi)^d\left(\overline{U}(t)-\underline{U}(t)\right)}{(\pi\sqrt{d})^{d+\alpha}}+\left(\overline{U}(t)-\underline{U}(t)\right)\left[2\chi-r+(\chi-r)\left(\overline{U}(t)+\underline{U}(t)\right)\right]\\
&\leq \left(\overline{U}(t)-\underline{U}(t)\right)\left[2\chi-r+(\chi-r)2\overline{U}(t)-\frac{(2\pi)^d\mathscr{C}_{d,\alpha}}{({2}\pi\sqrt{d})^{d+\alpha}}\right]-(\chi-r)\left(\overline{U}(t)-\underline{U}(t)\right)^2.
\end{align*}
Therefore, if
$$
-\gamma=2\chi-r+(\chi-r)2\left(\mathscr{ R}_3 (t; r,\epsilon,\chi,d,\alpha, u_0)  -1\right)-\frac{(2\pi)^d\mathscr{C}_{d,\alpha}}{({2}\pi\sqrt{d})^{d+\alpha}} <0,
$$
then there exists such $\delta$ that
$$
2\chi-r+(\chi-r)2\left( (1+\delta) \mathscr{ R}_3 (t; r,\epsilon,\chi,d,\alpha, u_0) -1\right)-\frac{(2\pi)^d\mathscr{C}_{d,\alpha}}{({2}\pi\sqrt{d})^{d+\alpha}} \le 0.
$$
Hence 
$$
\left(\overline{U}(t)-\underline{U}(t)\right)\leq \left(\overline{U}(0)-\underline{U}(0)\right)e^{-\gamma t}\rightarrow 0.
$$
Translating the previous inequality into our original variable $u$, we obtain that
$$
\left(\|u(t)\|_{L^\infty(\TT^d)}-\min_{x\in\TT^d}u(x,t)\right)\leq \left(\|u_0\|_{L^\infty(\TT^d)}-\min_{x\in\TT^d}u_0(x)\right)e^{-\gamma t}\rightarrow 0.
$$
This inequality implies that the solution $u$ converges to a constant $c_u$. Similarly, $v$ converges to another constant $-c_u$. There are only two possible steady state solutions that are constants, namely $(1,-1)$ and $(0,0)$. However, it is easy to see that the case $(0,0)$ is unstable. Indeed, our nonlocal fractional diffusion manifest its another useful feature here. Namely, for the classical laplacian one usually discards the possibility of vanishing of a solution (thus of staying in the case $(0,0)$) via assuming that $\min u_0 >0$, since then $\min_x u (t,x) >0$. In our case it suffices to have an initial data $u_0$ not identically zero. Indeed, assume that $0=\min_y u(y,t)$. Then, if we write $\underline{x}_t$ for the point such that $\min_y u(y,t)=u(\underline{x}_t,t)$, we have that for 
$u(t)$ not identically zero
$$
\partial_t u(\underline{x}_t,t)=-\Lambda^\alpha u(\underline{x}_t,t)+\chi u(\underline{x}_t,t)(u(\underline{x}_t,t)+v(\underline{x}_t,t))+ru(\underline{x}_t,t)(1-u(\underline{x}_t,t))=-\Lambda^\alpha u(\underline{x}_t,t)>0
$$ thanks to 
$$
-\Lambda^\alpha u(\underline{x}_t,t)>0 \; \text{ for $u(t)$ not identically zero}. 
$$
The only left scenario is then such that solutions vanishes uniformly. But this due to time-continuity demands that both $u$ and $v$ are close to $0$ uniformly earlier, and then ODI homeostatic part prevents further approaching zero (observed by looking again at the minimum of such a nonzero and close to zero solution).

We can write hence
\begin{align*}
\|u(t)-1\|_{L^\infty(\TT^d)}&=\max\{\|u(t)\|_{L^\infty(\TT^d)}-1,1-\min_{x\in\TT^d}u(x,t)\}\\
&\leq\|u(t)\|_{L^\infty(\TT^d)}-1+1-\min_{x\in\TT^d}u(x,t)\\
&\leq \left(\|u_0\|_{L^\infty(\TT^d)}-\min_{x\in\TT^d}u_0(x)\right)e^{-\gamma t}.
\end{align*}
Theorem \ref{thm2} is therefore shown.

In order to prove Corollary \ref{cor:1}, it suffices to observe that in the above proof one may replace every $\mathscr{ R}_3 (t; r,\epsilon,\chi,d,\alpha, u_0) $ with the data independent $\mathcal{R}_\infty(r,\epsilon,\chi,\alpha,d)$, at the cost of considering only sufficiently large times, see \eqref{boundninf}. 

\section{Proof of Theorem \ref{thm2b} (stability of the homogeneous solution II)}
We consider now the case $d=\alpha=1$. Let us pick a small parameter  $\min\{ 1,(4 \pi \chi)^{-1} -2 \pi \} \ge \delta>0$.  As a consequence of \eqref{LinfL12} we know that there exists a transient time $t^*(u_0,r,\delta)$ such that
\begin{equation}\label{t3a}
\|u(t)\|_{L^1}\leq 2\pi+\delta\quad\forall\,t\geq t^*.
\end{equation}

We will restrict our analysis to $t\geq t^*$.

Let us we denote by $\overline{x}_t$, $\underline{x}_t$ the points such that
$$
\overline{u}(t)=\max_y u(y)=u(\overline{x}_t),\;\underline{u}(t)=\min_y u(y)=u(\underline{x}_t)
$$
Note that we have
$$
0\leq \underline{u}(t)\leq 1+\frac{\delta}{2\pi}.
$$
As in the proof of Theorem \ref{thm2}, we obtain
$$
\frac{d}{dt}\overline{u}\leq-\Lambda u(\overline{x}_t)+\chi\overline{u}(\overline{u}+v(\overline{x}_t,t))+r\overline{u}(1-\overline{u}),
$$
$$
\frac{d}{dt}\underline{u}\geq-\Lambda u(\underline{x}_t)+\chi\underline{u}(\underline{u}+v(\underline{x}_t,t))+r\underline{u}(1-\underline{u}).
$$
Using \eqref{propv}, we have that
$$
-\underline{u}\geq v(\overline{x}_t,t),\;v(\underline{x}_t,t)\geq-\overline{u}.
$$
Via \eqref{eq:9b} in Appendix A we also compute
$$
\Lambda u(\overline{x}_t)\geq \frac{1}{4\pi}\left(2\pi \overline{u}-\int_\TT u(y,t)dy\right),\;-\Lambda u(\underline{x}_t)\geq \frac{-1}{4\pi}\left(2\pi \underline{u}-\int_\TT u(y,t)dy\right)
$$
so
$$
\frac{d}{dt}\overline{u}\leq\frac{-1}{4\pi}\left(2\pi \overline{u}-\int_\TT u(y,t)dy\right)+\chi\overline{u}(\overline{u}-\underline{u})+r\overline{u}(1-\overline{u}),
$$
$$
\frac{d}{dt}\underline{u}\geq\frac{-1}{4\pi}\left(2\pi \underline{u}-\int_\TT u(y,t)dy\right)+\chi\underline{u}(\underline{u}-\overline{u})+r\underline{u}(1-\underline{u}),
$$
hence together
\begin{equation}\label{eq:s3f}
\frac{d}{dt}\left(\overline{u} -\underline{u} \right) \leq -\frac{\overline{u}-\underline{u}}{2}+(\chi-r)(\overline{u}-\underline{u})(\overline{u}+\underline{u})+r(\overline{u}-\underline{u}).
\end{equation}
Lemma \ref{lemaaux3} now says that either
\begin{enumerate}
\item[(i)] $
(4 + \frac{2 \delta}{\pi} \ge)\; \frac{2}{\pi}\|u(t)\|_{L^1} \geq \|u(t)\|_{L^\infty} \; (\geq \frac{\|u(t)\|_{L^1}}{2\pi}),
$
or
\item[(ii)] $
\Lambda u(\overline{x}_t,t)\geq \frac{1}{4\pi}\frac{u(\overline{x}_t)^{2}}{\|u(t)\|_{L^1}} \;(\geq \frac{1}{4\pi}\frac{u(\overline{x}_t)^{2}}{2\pi+\delta}).
$
\end{enumerate} 
Let us now argue that there exists  a time $t^{**} \ge t^*$ such that $\overline{u} (t^{**} ) \le 4+\frac{2\delta}{\pi}$. Assume otherwise, i.e. for all $t \ge t^*$ it holds $\overline{u} (t) > 4+\frac{2\delta}{\pi}$. This excludes the case (i) and hence as in the proof of Theorem \ref{thm1} we have that
\begin{equation}\label{eq:s3g}
\frac{d}{dt}\|u(t)\|_{L^\infty} \leq \left(\chi-\frac{1}{4\pi (2\pi+\delta)}\right)\|u(t)\|_{L^\infty}^2+r\|u(t)\|_{L^\infty}(1-\|u(t)\|_{L^\infty}) \le r\|u(t)\|_{L^\infty}(1-\|u(t)\|_{L^\infty}),
\end{equation}
where the second inequality comes from the assumed 
$$
\chi-(8\pi^2)^{-1}<0
$$
and our choice of $\delta \le (4 \pi \chi)^{-1} -2 \pi$. Consequently $\|u(t)\|_{L^\infty}$ approaches $1$, therefore the assumption  $\overline{u} (t) > 4+\frac{2\delta}{\pi}$ for all $t \ge t^*$ is false. There must be then a finite 
$t^{**}$ at which $\overline{u} (t^{**}) \le 4+\frac{2\delta}{\pi}$. 

If for all $t  > t^{**}$ it still holds $\overline{u} (t) \le 4+\frac{2\delta}{\pi}$, then $(\overline{u}+\underline{u}) \le 8+\frac{4\delta}{\pi}$ and thanks to \eqref{eq:s3f}
\[
\frac{d}{dt}\left(\overline{u}-\underline{u}\right) \leq \left[(\chi-r)(8+\frac{4\delta}{\pi})+r-\frac{1}{2}\right](\overline{u}-\underline{u}) = \epsilon (\overline{u}-\underline{u})
\]
with $\epsilon \le 8 \chi - \frac{1}{2} + \frac{4\delta}{\pi} \chi < \frac{1}{\pi^2}- \frac{1}{2} + \frac{\delta}{2 \pi^3} \le \frac{1}{2} (\frac{1}{\pi} -1)$ , where we have used the before needed
$\chi-\frac{1}{8\pi^2}<0$ and $\delta\le 1$.
Hence
$$
\left(\overline{u}(t)-\underline{u}(t)\right) \le (\overline{u}(t^{**})-\underline{u}(t^{**}) ) e^{\frac{1}{2} (\frac{1}{\pi} -1) t} \le  (8+\frac{4\delta}{\pi}) e^{\frac{1}{2} (\frac{1}{\pi} -1) t} \le 10 e^{\frac{1}{2} (\frac{1}{\pi} -1) t}.
$$
As before, this implies that the solution $u$ converges to a constant. This constant can only be $0$ or $1$ and we have seen that $0$ is unstable.

Finally it could happen that  $\overline{u} (t^{**}) \le 4+\frac{2\delta}{\pi}$, but at some further time  $\overline{u} (t) > 4+\frac{2\delta}{\pi}$, Hence there exists $t_\dagger$ such that $\overline{u} (t_\dagger) = 4+\frac{2\delta}{\pi}$ and immediately before $t_\dagger$ we have $\overline{u} ({t_\dagger}^-) < 4+\frac{2\delta}{\pi}$: this is merely continuity in the case $\overline{u} (t^{**}) < 4+\frac{2\delta}{\pi}$ and observation that $
\frac{d}{dt}\|u(t^{**})\|_{L^\infty} <0 $ provided $\overline{u} (t^{**}) = 4+\frac{2\delta}{\pi}$, compare \eqref{eq:s3g}, so the  in this case $\overline{u}$ must drop below $4+\frac{2\delta}{\pi}$ immediately after $t^{**}$.
But now the existence of $t_\dagger$ is contradicted again by  \eqref{eq:s3g} giving $\frac{d}{dt}\|u(t_\dagger)\|_{L^\infty} <0 $ i.e. $\overline{u} ({t_\dagger}^-) > 4+\frac{2\delta}{\pi}$.

\section{Proof of Theorem \ref{thm3}. Uniqueness}

For a number $p$, let us denote by $p^+$ any number larger than $p$ and by $p^-$ any number smaller than $p$. In particular, $\infty^-$ is any finite number.

Let us take two distributional solutions $u_1, u_2$ to \eqref{eqDD} starting from the same initial datum $u_0$ and belonging to $L^2 (L^{2^+}))$.  Consequently for $\bu =u_1 -  u_2$ one has
\begin{equation}\label{eq:ud}
\pat \bu=- \Lambda^\alpha \bu+\chi \nabla\cdot(\bu B (u_1) ) +\chi \nabla\cdot(u_2  B (\bu) ) + r \bu - r \bu (u_1 + u_2)
\end{equation}
in the sense of distribution, where 
$$
B(u)=\nabla(\Delta-1)^{-1}u.
$$ 
Let us multiply \eqref{eq:ud} with a sufficiently regular  $\psi$. Hence
\begin{equation}\label{eq:psi1}
\begin{aligned}
&\int \pat \bu \psi = \\
&- \int \Lambda^{\alpha + \rho -1} \bu \Lambda^{1-\rho} \psi + \chi \int \Lambda^{\rho} R \cdot (v   B (u_1)) \Lambda^{1-\rho} \psi + \chi \int \Lambda^\rho  R \cdot (u_2   B (\bu)) \Lambda^{1-\rho}  \psi  \\
&+  r  \int \bu \psi - r  \int \bu  (u_1 + u_2) \psi,
\end{aligned}
\end{equation}
where integrals are over space and time.
Observe that for a given $\rho \in [0,1)$ and $\psi \in L^2 (H^{1-\rho} )$ the first term on the r.h.s. above is finite provided $u_1, u_2$ belongs to $L^2 (H^{\alpha+\rho-1} )$. Let us consider conditions for  finiteness of further terms on the r.h.s. for $\psi \in L^2 (H^{1-\rho} )$. Concerning the integrals involving $\chi$, when the differentiation does not hit $B$, it suffices that $ L^2 (H^{\rho^+})$ (or  $ L^2 (H^{\rho})$ for $d=1$. Let us continue with the case $d=2$ only and observe in the analogous computations for the case $d=1$ one does not need $\cdot^+$), since for $d \le2$ 
\begin{equation}\label{eq:buq}
\|B (u)\|_{L^\infty (L^{\infty^-})}\le {c\|B (u)\|_{L^\infty (\dot{H}^{1})}}\le C\|u\|_{L^\infty (L^2)}.
\end{equation}
When the differentiation hits $B$, since
\[
 \| \bu (t)\|_{L^{p_3}}   \| \Lambda^{\rho}  B (u_1 (t)) \|_{L^{p'_3}}  \|\Lambda^{1-\rho} \psi (t) \|_{L^2} \le C \| u_2 (t)\|_{H^{\rho}}   \|v (t) \|_{L^2} \|\psi (t)\|_{H^{1-\rho}},
\]
provided 
\[
\rho -1 - \frac{d}{p'_3} \le -\frac{d}{2}, \quad - \frac{d}{p_3} \le \rho - \frac{d}{2},
\]
we need $u_1, u_2 \in L^\infty (L^2) \cap L^2 (H^\rho)$. The same holds for the other integral involving $\chi$. Finally, to deal with quadratic terms involving $r$, we observe that by embedding
\[
\int |\bu  (u_1 + u_2) \psi| \le C (\|u^2_1\|_{L^2 (L^\xi)} +  \|u^2_2\|_{L^2 (L^\xi)} ) \|\psi\|_{L^2(H^{1-\rho})},
\]
for $\xi = \frac{2d}{2 - 2 \rho +d}$ and for finiteness of $L^2 (L^\xi)$ norms we interpolate $L^\infty (L^2)$ and $L^2 (H^{(\rho -1 + d/2)^+})$. So  $u_1, u_2 \in  L^2 (H^{\rho^+}) \cap L^\infty (L^2)$ suffices here. Putting together all our requirements, we see that  for $\psi \in L^2 (H^{1-\rho} )$ in \eqref{eq:psi1} it is enough to have
\begin{equation}\label{con3}
u_1, u_2 \in L^\infty (L^2) \cap L^2 (H^{\rho^+}) \cap L^2 (H^{1-\rho}) \cap L^2 (H^{\alpha+\rho-1} )
\end{equation}
Then l.h.s. of \eqref{eq:psi1} is finite, i.e. $\pat \bu \in L^2 (H^{\rho-1})$ and since $u_1, u_2 \in L^2 (H^{1- \rho})$, then by interpolation $\bu \in C(L^2)$. 

Next, in view of the assumed $u_1, u_2 \in L^2 (H^\alpha)$, we can have $\psi = \bu$, choosing $1-\rho = \alpha/2$. Consequently
\[
\frac{1}{2} \frac{d}{dt}\|\bu(t)\|^2_{L^2} + \|\bu(t)\|^2_{ \dot{H}^\frac{\alpha}{2}}  \leq \chi \left|  \langle \bu B (u_1) \nabla v \rangle \right|+  \chi \left| \int \Lambda^{1-\alpha/2} R \cdot (v   B (u_1)) \Lambda^{\alpha/2} \bu \right|   +  r   \|\bu(t)\|^2_{L^2},
\]
where we have also used nonnegativity and $\langle \cdot \rangle$  denotes  the duality pairing $\langle \cdot \rangle_{H^\rho, H^{1- \rho}}$. 
Hence, using very weak integration by parts 
\[
\frac{1}{2} \frac{d}{dt}\|\bu(t)\|^2_{L^2} + \|\bu(t)\|^2_{\dot{H}^\frac{\alpha}{2}}  \leq \frac{\chi}{2} \int \nabla \cdot B (u_1) (t)  \bu^2 (t)  + \chi \int |\Lambda^{1-\alpha/2} R \cdot (u_2   B (\bu))  | |\Lambda^{\alpha/2} \bu| + r   \|\bu(t)\|^2_{L^2} 
\]
and consequently, since $\|\nabla \cdot B f \|_p \le C\| f \|_p$
\begin{multline}\label{eq:ud2}
\frac{1}{2} \frac{d}{dt}\|\bu(t)\|^2_{L^2} + \|\bu(t)\|^2_{H^\frac{\alpha}{2}}  \leq \\
\frac{\chi}{2}  \| u_1 (t) \|_{L^{p_1}} \| \bu (t) \|^2_{L^{2p'_1}} + C  \|\Lambda^{1-\alpha/2} u_2 (t)\|_{L^{2^+}}   \|B (\bu (t)) \|_{L^{\infty^-}}  \|\Lambda^{\alpha/2} \bu (t) \|_{L^{2}} + \\
C  \| u_2 (t)\|_{L^{p_3}}   \| \Lambda^{1-\alpha/2}  B (\bu (t)) \|_{L^{p'_3}}  \|\Lambda^{\alpha/2} \bu (t) \|_{L^2}  +  r   \|\bu(t)\|^2_{L^2}  \\
=: \frac{\chi}{2} I  + C II + C III +  r   \|\bu(t)\|^2_{L^2}.
\end{multline}
Using interpolation and embeddings, we estimate the terms on r.h.s. as follows (suppressing $t$ for a moment)
\[
\begin{aligned}
& I \le \| u_1 \|_{L^{p_1}} \| \bu \|^{2\theta}_{L^2}  \| \bu \|^{2(1-\theta)}_{H^\frac{\alpha}{2}}  \le \epsilon  \| \bu \|^2_{H^\frac{\alpha}{2}}  +C_\epsilon \| u_1 \|^\frac{1}{\theta} _{L^{p_1}} \|\bu \|^{2}_{L^2},  \qquad \text{ where }  \frac{1}{p_1'} = \frac{\alpha}{d} (\theta -1) + 1\\
& II \le   C\|\Lambda^{1-\alpha/2} u_2\|_{L^{2^+}}  \|v \|_{L^2}   \|\Lambda^{\alpha/2} \bu \|_{L^2} \le C\|u_2\|_{H^{(1-\alpha/2)^+} } \|v \|_{L^2}   \| \bu \|_{H^{\alpha/2}} \le C\|u_2\|_{H^{\alpha} } \|v \|_{L^2}   \| \bu \|_{H^{\alpha/2}}\\
& III \le C_\epsilon\| u_2\|^2_{L^{p_3}}  \|v \|^2_{L^2}  + \epsilon \|\bu\|^2_{H^\frac{\alpha}{2}} \qquad \text{ for }   -\alpha/2 -\frac{d}{p'_3} \le - \frac{d}{2}.
\end{aligned}
\]
\[
\frac{\chi}{2}  \| u_1 (t) \|_{L^{p_1}} \| \bu (t) \|^2_{L^{2p'_1}} 
\]
More precisely, the middle inequality involves \eqref{eq:buq} and $\alpha > (1-\alpha/2)^+$ by assumed $\alpha>1$. The last inequality uses the fact that $\Lambda^{1-\alpha/2}  B$ is on the Fourier side $\sim |\xi|^{1- \alpha/2} \frac{\xi}{|\xi|^2 +1}$, hence there is no problem with null modes for  $W^{-\alpha/2, p'_3}$ so by embedding for $  -\alpha/2 -\frac{d}{p'_3} \le - \frac{d}{2}$, $p'_3 \in (1, \infty)$
\[
 \| \Lambda^{1-\alpha/2}  B (\bu ) \|_{L^{p'_3}} \le   C  \| \bu \|_{L^2}.
\]
Consequently
\[
III \le   C_\epsilon \|u_2\|^2_{H^{(1-\alpha/2)} }  \|v \|^2_{L^2}  + \epsilon \|\bu\|^2_{H^\frac{\alpha}{2}}  \le  C_\epsilon \|u_2\|^2_{H^{\alpha} }  \|v \|^2_{L^2}  + \epsilon \|\bu\|^2_{H^\frac{\alpha}{2}}  \quad \text{ provided } p_3 = \frac{2d}{d- 2 (1-\alpha/2)}
\]
This last choice of $p_3$ is within the needed before condition  $-\alpha/2 -\frac{d}{p'_3} \le - \frac{d}{2}$. Altogether, the above estimates yield in \eqref{eq:ud2}
\[
\frac{d}{dt}\|\bu(t)\|^2_{L^2} + \|\bu(t)\|^2_{H^\frac{\alpha}{2}}  \leq C \| u_1 (t) \|^\frac{1}{\theta} _{L^{p_1}} \|\bu (t) \|^{2}_{L^2}  + C \|u_2 (t)\|^2_{H^{(1-\alpha/2)^+} }   \| \bu(t) \|^2_{L^2} + C   \|\bu(t)\|^2_{L^2}.
\]
Hence for uniqueness we need only to check whether 
\[
\int_0^T  \| u_1 (t) \|^\frac{1}{\theta}_{L^{p_1}}  dt < \infty,
\]
where  $\frac{1}{\theta} = \frac{\alpha p_1}{\alpha p_1 -d}$. Requiring $ \frac{1}{\theta} = 2$, we need $p_1 = 2d/ \alpha$, so $u_1 \in  L^2 (H^{\frac{d- \alpha}{2}}) $ is sufficient. 

\appendix

\section{On the fractional Laplacian}
The $d-$dimensional fractional (minus) Laplacian $\Lambda^{\alpha}$ is defined through the Fourier transform as
$$
\widehat{\Lambda^\alpha u}(\xi)=|\xi|^\alpha \hat{u}(\xi).
$$
This operator also enjoys the following representation as a singular integral (see \cite{Ghyperparweak} for an elementary derivation):
\begin{equation}\label{eq:1b.1.5}
\Lambda^{\alpha}u=\mathscr{C}_{d,\alpha} \text{P.V.}\int_{\RR^d}\frac{u(x)-u(y)}{|x-y|^{d+\alpha}}dy,
\end{equation}
where
$$
\mathscr{C}_{d,\alpha}=2\left(\int_{\RR^d}\frac{4\sin^2\left(\frac{x_1}{2}\right)}{|x|^{d+\alpha}} dx\right)^{-1}.
$$

In the case of periodic functions, we have the following equivalent representation
\begin{align}\label{eq:9}
\Lambda^\alpha u(x)&=\mathscr{C}_{d,\alpha}\bigg{(}\sum_{k\in \ZZ^d, k\neq 0}\int_{\TT^d}\frac{u(x)-u(x-y)dy}{|y+2k\pi|^{d+\alpha}}+\text{P.V.}\int_{\TT^d}\frac{u(x)-u(x-y)dy}{|y|^{d+\alpha}}\bigg{)}.
\end{align}

Let us emphasize that in the case $d=1=\alpha$, the previous series can be computed and results in

\begin{align}\label{eq:9b}
\Lambda u(x)&=\frac{1}{4\pi}\int_{\TT}\frac{u(x)-u(x-y)dy}{\sin^2\left(y/2\right)}.
\end{align}
Then we have the following results

\begin{lem}[\cite{BGK, BG4, burczak2016suppression, Ghyperparweak}]\label{lemaentropy2}
Let $0<s$,  \;$0<\alpha<2$, \; $0<\delta<\alpha/(2+2s)$ and $d \ge 1$. Then for a sufficiently smooth $u \ge 0$ it holds
\begin{equation}\label{ene:1}
\frac{4s}{(1+s)^2}  \int_\Td  | \Lambda^\frac{\alpha}{2} (u^\frac{s+1}{2}) |^2dx \le \int_{\Td}\Lambda^\alpha u(x) u^s(x)dx.
\end{equation}
If additionally $s \le 1$, then
\begin{equation}\label{ene:2}
\|u\|_{\dot{W}^{\alpha/(2+2s)-\delta,1+s} (\Td)}^{2+2s}\leq \mathscr{S}(\alpha,s,\delta, d)\|u\|_{L^{1+s} (\Td) }^{1+s} \int_{\Td}\Lambda^\alpha u(x) u^s(x)dx,
\end{equation}
where $\mathscr{S}(\alpha,s,\delta, d)$ can be taken as
$$
\mathscr{S}(\alpha,s,\delta, d)=\frac{2^{2s+1}}{\mathscr{C}_{d,\alpha}s}\sup_{x\in\TT^d}\int_{\TT^d}\frac{1}{|x-y|^{d-2(1+s)\delta}}dy
$$

Furthermore, for a sufficiently smooth $u \ge 0$, $0<\alpha<2$, $0<\delta<\alpha/2$ and $d \ge 1$, the extremal case $s=0$ holds
\begin{equation}\label{ene:3}
\|u\|_{\dot{W}^{\alpha/2-\delta,1}(\TT^d)}^2\leq \mathscr{S}(\alpha,0,\delta, d)\|u\|_{L^1(\TT^d)}\int_{\Td}\Lambda^\alpha u(x)\log(u(x))dx,
\end{equation}
with
$$
\mathscr{S}(\alpha,0,\delta, d)=\frac{2}{\mathscr{C}_{d,\alpha}}\sup_{x\in\TT^d}\int_{\TT^d}\frac{1}{|x-y|^{d-2\delta}}dy.
$$
\end{lem}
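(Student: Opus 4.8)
The plan is to derive all three inequalities from two ingredients: the symmetric singular–integral representation of the quadratic/entropy forms afforded by \eqref{eq:9}, and a handful of elementary pointwise algebraic inequalities. Throughout I would use that, for a sufficiently smooth $f$,
\[
\int_{\TT^d}\Lambda^\alpha f(x)\, f(x)\,dx \ge \frac{\mathscr{C}_{d,\alpha}}{2}\int_{\TT^d}\int_{\TT^d}\frac{(f(x)-f(y))^2}{|x-y|^{d+\alpha}}\,dy\,dx,
\]
which follows from \eqref{eq:9} after symmetrizing in $(x,y)$ and discarding the nonnegative contributions of the lattice translates $k\neq 0$; the same symmetrization applied to $\int \Lambda^\alpha u\, u^s$ produces the kernel-weighted double integral of $(u(x)-u(y))(u^s(x)-u^s(y))$.

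For \eqref{ene:1} I would invoke the pointwise inequality, valid for all $a,b\ge 0$ and $s>0$,
\[
(a-b)(a^s-b^s)\ge \frac{4s}{(1+s)^2}\big(a^{(s+1)/2}-b^{(s+1)/2}\big)^2,
\]
whose sharp constant is visible by expanding near $a=b$ and which I would prove by reducing, after scaling, to the one-variable statement $t\mapsto (t-1)(t^s-1)-\tfrac{4s}{(1+s)^2}(t^{(s+1)/2}-1)^2\ge 0$. Inserting this into the symmetrized representation of $\int\Lambda^\alpha u\,u^s$ and recognizing the right-hand side as $\tfrac{4s}{(1+s)^2}$ times the Gagliardo form of $u^{(s+1)/2}$, hence $\ge \tfrac{4s}{(1+s)^2}\|\Lambda^{\alpha/2}(u^{(s+1)/2})\|_{L^2}^2$, yields \eqref{ene:1} directly.

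For \eqref{ene:2}, writing $w=u^{(s+1)/2}$, I would start from the elementary bound $|a-b|^{1+s}\le |a^{(s+1)/2}-b^{(s+1)/2}|\,(a^{(s+1)/2}+b^{(s+1)/2})$ for $a,b\ge0$, $0<s\le1$ (itself a consequence of $|a^q-b^q|\ge|a-b|^q$ for $q=s+1\ge1$). Substituting into the Gagliardo seminorm $\|u\|_{\dot W^{\beta,1+s}}^{1+s}$ with $\beta=\tfrac{\alpha}{2+2s}-\delta$ and splitting the kernel exponent $d+\beta(1+s)=\tfrac{d+\alpha}{2}+\big(\tfrac d2-(1+s)\delta\big)$, a Cauchy–Schwarz in $L^2(dx\,dy)$ factors the integral into (i) the Gagliardo form of $w$ with kernel $|x-y|^{-(d+\alpha)}$, controlled via \eqref{ene:1} by $\int\Lambda^\alpha u\,u^s$, and (ii) the integral of $(w(x)+w(y))^2|x-y|^{-(d-2(1+s)\delta)}$, bounded by $4\|w\|_{L^2}^2\sup_x\int_{\TT^d}|x-y|^{-(d-2(1+s)\delta)}dy=4\|u\|_{L^{1+s}}^{1+s}\sup_x\int_{\TT^d}|x-y|^{-(d-2(1+s)\delta)}dy$. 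Squaring and collecting constants gives \eqref{ene:2} with the stated $\mathscr{S}(\alpha,s,\delta,d)$; the restriction $\delta<\alpha/(2+2s)$ is exactly what keeps $\beta>0$ and the leftover kernel locally integrable on $\TT^d$.

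The entropy case \eqref{ene:3} follows the same template with $u^{(s+1)/2}$ replaced by $u^{1/2}$ and $u^s$ by $\log u$: I would use the exact identity $|a-b|=|\sqrt a-\sqrt b|(\sqrt a+\sqrt b)$ in the Gagliardo seminorm of order $\alpha/2-\delta$, split the kernel as $d+\alpha/2-\delta=\tfrac{d+\alpha}{2}+(\tfrac d2-\delta)$, and apply Cauchy–Schwarz; the $w$-factor is now controlled by the pointwise logarithmic–geometric mean inequality $(\sqrt a-\sqrt b)^2\le(a-b)(\log a-\log b)$ (equivalently $L(a,b)\le(\sqrt a+\sqrt b)^2$ for the logarithmic mean $L$), which converts $\|\Lambda^{\alpha/2}(u^{1/2})\|_{L^2}^2$ into $\int\Lambda^\alpha u\,\log u$. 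I expect the main obstacle to be bookkeeping rather than ideas: pinning down the sharp constants in the two pointwise inequalities and, more delicately, justifying the passage from \eqref{eq:9} to the clean symmetric double-integral bounds on the torus, so that discarding the $k\neq0$ lattice sums is legitimate and the whole-space Gagliardo identity may be used as an upper bound for the periodic quadratic form.
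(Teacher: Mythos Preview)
The paper does not prove this lemma; it is quoted without argument from the cited references. Your outline is the standard route taken there and is correct in substance. Two small points are worth tightening.

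For \eqref{ene:1} you should \emph{not} discard the $k\neq 0$ lattice translates. After pairing $k\leftrightarrow -k$ one has the exact symmetrized identity
\[
\int_{\TT^d}u^s\,\Lambda^\alpha u\,dx=\frac{\mathscr{C}_{d,\alpha}}{2}\int_{\RR^d}\int_{\TT^d}\frac{(u(x)-u(x-y))(u^s(x)-u^s(x-y))}{|y|^{d+\alpha}}\,dx\,dy,
\]
and the analogous identity with $(u,u^s)$ replaced by $(w,w)$, $w=u^{(s+1)/2}$; applying your pointwise inequality at every $(x,y)$ then gives \eqref{ene:1} with no loss. Discarding the translates is only needed (and is legitimate, since each paired contribution is nonnegative) in the proofs of \eqref{ene:2}--\eqref{ene:3}, where a lower bound by the torus-only Gagliardo integral suffices.

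On constants: your Cauchy--Schwarz split in \eqref{ene:2} delivers the prefactor $2(1+s)^2/(s\,\mathscr{C}_{d,\alpha})$ rather than the stated $2^{2s+1}/(s\,\mathscr{C}_{d,\alpha})$; both are valid upper bounds, they just differ for $0<s<1$. In \eqref{ene:3}, to match $\mathscr{S}(\alpha,0,\delta,d)=\tfrac{2}{\mathscr{C}_{d,\alpha}}\sup_x\int_{\TT^d}|x-y|^{-(d-2\delta)}dy$ exactly you need the sharp logarithmic-mean inequality $(\sqrt a-\sqrt b)^2\le \tfrac14(a-b)(\log a-\log b)$ (equivalently $L(a,b)\le \tfrac14(\sqrt a+\sqrt b)^2$), not the weaker constant $1$ you wrote; with the factor $1/4$ the bookkeeping matches.
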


\begin{lem}\label{lemapoincare}
Let $0\leq u\in L^{1+s}(\TT^d)$, $0<s<\infty$, be a given function and $0<\alpha<2$, be a fixed constant. Then, 
$$
\mathscr{P}(d,\alpha)\|u\|_{L^{1+s}}^{1+s}\le \int_{\TT^d}\Lambda^\alpha u(x) u^s(x)dx+\frac{\mathscr{P}(d,\alpha)}{\left(2\pi\right)^d}\left(\int_{\TT^d}u(x)dx\right)\left(\int_{\TT^d}u^s(x)dx\right),
$$
for 
$$
\mathscr{P}(d,\alpha)= \frac{4\left(\int_{\RR^d}\frac{4\sin^2\left(\frac{x_1}{2}\right)}{|x|^{d+\alpha}} dx\right)^{-1}}{\left(2\pi \right)^{\alpha}d^{\frac{d+\alpha}{2}}}.
$$
Furthermore, in the case $d=\alpha=1$, this constant can be taken
$$
\mathscr{P}(1,1)=1.
$$
\end{lem}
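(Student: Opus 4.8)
The plan is to reduce the inequality to the elementary pointwise bound $(a-b)(a^s-b^s)\ge 0$, valid for all $a,b\ge 0$, by \emph{symmetrising} the singular-integral representation of $\Lambda^\alpha$; the term $\frac{\mathscr{P}(d,\alpha)}{(2\pi)^d}\big(\int_{\TT^d}u\big)\big(\int_{\TT^d}u^s\big)$ will turn out to be exactly the remainder that this symmetrisation leaves, relative to a clean Poincar\'e-type estimate. It suffices to argue for smooth, strictly positive $u$; for general $0\le u\in L^{1+s}$ one approximates and passes to the limit by monotone convergence, which is legitimate precisely because the symmetrised integrand below is pointwise nonnegative.

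First I would use the periodic representation \eqref{eq:9}, i.e.\ write $\Lambda^\alpha u(x)=\mathscr{C}_{d,\alpha}\,\mathrm{P.V.}\!\int_{\TT^d}\big(u(x)-u(x-y)\big)K(y)\,dy$ with the even kernel $K(y):=\sum_{k\in\ZZ^d}|y+2\pi k|^{-(d+\alpha)}$. Exploiting $K(-y)=K(y)$, the substitution $y\mapsto -y$ and periodicity of $u$ in $x$, one obtains the symmetrised identity
\[
\int_{\TT^d}\Lambda^\alpha u(x)\,u^s(x)\,dx=\frac{\mathscr{C}_{d,\alpha}}{2}\int_{\TT^d}\!\!\int_{\TT^d}\big(u(x)-u(x-y)\big)\big(u^s(x)-u^s(x-y)\big)K(y)\,dy\,dx,
\]
which for smooth $u$ is an absolutely convergent integral (the integrand is $O(|y|^{2-d-\alpha})$ near $y=0$, integrable since $\alpha<2$) with pointwise nonnegative integrand. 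I would then bound the kernel below on a single period by a positive constant, using $K(y)\ge|y|^{-(d+\alpha)}$ and $|y|$ no larger than the radius of $\TT^d$, perform the change of variables $x'=x-y$ (valid since the integrand is periodic in $x'$), and expand the product; the resulting double integral over $\TT^d\times\TT^d$ equals precisely $2\big[(2\pi)^d\!\int_{\TT^d}u^{1+s}-\big(\int_{\TT^d}u\big)\big(\int_{\TT^d}u^s\big)\big]$.

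Combining these steps gives $\int_{\TT^d}\Lambda^\alpha u\,u^s\,dx\ge c(d,\alpha)\big[(2\pi)^d\int_{\TT^d}u^{1+s}-(\int_{\TT^d}u)(\int_{\TT^d}u^s)\big]$, and a short comparison of the explicit constants shows $(2\pi)^d c(d,\alpha)\ge\mathscr{P}(d,\alpha)$ (the $\mathscr{P}(d,\alpha)$ of the statement being a deliberately conservative value). Since $u$ and $u^s$ are similarly ordered, Chebyshev's integral inequality yields $(2\pi)^d\int_{\TT^d}u^{1+s}\ge(\int_{\TT^d}u)(\int_{\TT^d}u^s)$, so the bracket is nonnegative and $c(d,\alpha)$ may be lowered to $\mathscr{P}(d,\alpha)/(2\pi)^d$ without loss; rearranging is exactly the asserted inequality. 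For the borderline case $d=\alpha=1$ I would rerun the argument starting from the explicit kernel \eqref{eq:9b}, where the sharper bound $\frac{1}{4\pi\sin^2(y/2)}\ge\frac{1}{4\pi}$ on $[-\pi,\pi]$ replaces the cruder kernel estimate and produces the improved constant $\mathscr{P}(1,1)$.

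The only genuinely delicate point I anticipate is making the symmetrised identity rigorous --- in particular handling the principal value near $y=0$ --- for data merely in $L^{1+s}$; this is exactly why one argues first for smooth positive $u$, where the symmetrised integral is absolutely convergent and all manipulations are unproblematic, and only then passes to the limit, the sign of the integrand making monotone convergence available. Everything else --- the two changes of variables, expanding the square, and the constant bookkeeping together with Chebyshev's inequality --- is routine.
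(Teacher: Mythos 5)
Your argument is essentially the paper's own proof of Lemma~\ref{lemapoincare}: symmetrise the periodic singular-integral representation \eqref{eq:9}, use $(a-b)(a^s-b^s)\ge 0$ to discard (or, in your version, retain but lower-bound) the $k\neq 0$ part of the kernel, bound the $k=0$ kernel below by a constant coming from the diameter of the torus, expand the symmetrised product into $2\bigl[(2\pi)^d\int u^{1+s}-(\int u)(\int u^s)\bigr]$, and treat $d=\alpha=1$ separately via \eqref{eq:9b} with $\sin^2(y/2)\le 1$. The one step that does not close as you state it is the constant comparison: carrying the symmetrisation factor $\tfrac12$ honestly, your $c(d,\alpha)=\mathscr{C}_{d,\alpha}(2\pi\sqrt d)^{-(d+\alpha)}$ gives $(2\pi)^d c(d,\alpha)=\mathscr{P}(d,\alpha)/2$, not $\ge\mathscr{P}(d,\alpha)$ (the paper reaches the stated $\mathscr{P}$ only because its displayed chain omits that $\tfrac12$), so your route as written proves the lemma with $\mathscr{P}(d,\alpha)/2$ in place of $\mathscr{P}(d,\alpha)$ --- a harmless discrepancy for everything downstream, and the Chebyshev step you invoke to ``lower'' the constant is superfluous since you would in fact need to raise it.
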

\begin{proof}Note that 
$$
\sup_{x,y\in\TT^d}|x-y|=\text{length of $d$-dimensional hypercube's longest diagonal}= 2\pi \sqrt{d}
$$
Due to the positivity of the terms
$$
0\leq \mathscr{C}_{d,\alpha}\sum_{k\in \ZZ^d, k\neq 0}\int_{\TT^d} u^s(x)\int_{\TT^d}\frac{u(x)-u(x-y)dy}{|y+2k\pi|^{d+\alpha}}dx,
$$
we have that
\begin{align*}
\int_{\TT^d}u^s(x)\Lambda^{\alpha}u(x)dx&\geq \mathscr{C}_{d,\alpha}\int_{\TT^d}\text{P.V.}\int_{\TT^d}\frac{(u(x)-u(y))(u^s(x)-u^s(y))}{|x-y|^{d+\alpha}}dy dx\\
&\geq \frac{\mathscr{C}_{d,\alpha}}{\left(2\pi \sqrt{d}\right)^{d+\alpha}}\int_{\TT^d}\int_{\TT^d}(u(x)-u(y))(u^s(x)-u^s(y))dy dx\\
&\geq \frac{2\mathscr{C}_{d,\alpha}(2\pi)^d}{\left(2\pi \sqrt{d}\right)^{d+\alpha}}\|u\|_{L^{1+s}}^{1+s}-\frac{2\mathscr{C}_{d,\alpha}}{\left(2\pi \sqrt{d}\right)^{d+\alpha}}\left(\int_{\TT^d}u(x)dx\right)\left(\int_{\TT^d}u^s(x)dx\right).
\end{align*}
Then, we obtain that
$$
\mathscr{P}(d,\alpha)= \frac{2\mathscr{C}_{d,\alpha}}{\left(2\pi \right)^{\alpha}d^{\frac{d+\alpha}{2}}}.
$$
In the case $d=\alpha=1$, we have that
$$
\Lambda u(x)=\frac{1}{4\pi}\text{P.V.}\int_\TT \frac{u(x)-u(x-y)}{\sin^2(y/2)}dy.
$$
Thus, repeating the argument using $\sin^2\leq 1$, we find that
$$
\mathscr{P}(1,1)=1.
$$
\end{proof}
Quite remarkably, the nonlocal character of the fractional Laplacian allows for pointwise estimates:
\begin{lem}[\cite{Gsemiconductor, BG4, AGM}]\label{lemaaux3}
Let $h\in C^2(\TT^d)$ be a function. Assume that $h(x^*):=\max_x h(x)>0$. Then, there exists two constants $\mathscr{M}_i(d,p,\alpha),$ $i=1,2$ such that either
$$
\mathscr{M}_1(d,p,\alpha)\|h\|_{L^p} \geq h(x^*),
$$
or
$$
\Lambda^\alpha h(x^*)\geq \mathscr{M}_2(d,p,\alpha)\frac{h(x^*)^{1+\alpha p/d}}{\|h\|^{\alpha p/d}_{L^p}},
$$
with
$$
\mathscr{M}_1(d,p,\alpha)=\left(\frac{\pi^{d/2}}{2^{1+p}}\int_{0}^\infty z^{d/2}e^{-z}dz\right)^{1/p}
$$
and
$$
\mathscr{M}_2(d,p,\alpha)=\mathscr{C}_{d,\alpha}\frac{\left(\frac{\pi^{d/2}}{\int_{0}^\infty z^{d/2}e^{-z}dz}\right)^{1+\alpha/d}}{4\cdot 2^{\frac{(p+1)\alpha}{d}}}.
$$
Furthermore, in the case $d=1=\alpha$, we have that $\mathscr{M}_i(1,1,1)$ can be taken as
$$
\mathscr{M}_1(1,1,1)=\frac{2}{\pi},\;\mathscr{M}_2(1,1,1)=\frac{1}{4\pi}.
$$

\end{lem}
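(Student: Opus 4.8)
\emph{Plan.} The plan is to prove Lemma~\ref{lemaaux3} by the now-classical ``evaluation at a maximum'' method for fractional Laplacians: write $\Lambda^\alpha h$ at a maximum point of $h$ via its singular-integral representation, use the definite sign of the numerator there, and then convert the resulting nonnegative integral into a quantitative lower bound by a Chebyshev-type alternative.

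First I would fix $x^*$ with $h(x^*)=\max_x h(x)>0$ and start from the periodic representation \eqref{eq:9}. Since $h(x^*)-h(x^*-y)\ge0$ for every $y$, each summand in \eqref{eq:9} is nonnegative, and because $h\in C^2$ (so $h(x^*)-h(x^*-y)=O(|y|^2)$ near $0$, as $\nabla h(x^*)=0$) the integrand of the principal-value piece is $O(|y|^{2-d-\alpha})$ near the origin, hence absolutely integrable for $\alpha<2$, and no cancellation is exploited. Therefore, for every radius $R\le\pi$ --- exactly the condition that the ball $B_R(0)$ fit inside one fundamental cell $[-\pi,\pi]^d$ --- one has
$$ \Lambda^\alpha h(x^*)\ \ge\ \mathscr{C}_{d,\alpha}\int_{B_R(0)}\frac{h(x^*)-h(x^*-y)}{|y|^{d+\alpha}}\,dy\ \ge\ \frac{\mathscr{C}_{d,\alpha}}{R^{d+\alpha}}\int_{B_R(0)}\bigl(h(x^*)-h(x^*-y)\bigr)\,dy. $$
I would then discard the ``bad'' part of the last integral where $h(x^*-y)>h(x^*)/2$, keeping only the good set $G_R=\{y\in B_R(0):h(x^*-y)\le h(x^*)/2\}$, on which the drop is at least $h(x^*)/2$; this gives $\Lambda^\alpha h(x^*)\ge\tfrac12\,\mathscr{C}_{d,\alpha}\,h(x^*)\,R^{-(d+\alpha)}\,|G_R|$. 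The bad part is controlled by Chebyshev's inequality: $|B_R(0)\setminus G_R|\le|\{h>h(x^*)/2\}|\le 2^p\|h\|_{L^p}^p/h(x^*)^p$.

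Now the dichotomy comes out by itself. If $h(x^*)$ is so small relative to $\|h\|_{L^p}$ that the Chebyshev bound for the bad part already exceeds half the volume of the largest admissible ball $B_\pi(0)$, then a rearrangement gives the first alternative $h(x^*)\le\mathscr{M}_1(d,p,\alpha)\|h\|_{L^p}$, with $\mathscr{M}_1$ determined by that volume. Otherwise I may pick $R\le\pi$ with $\omega_d R^d=|B_R(0)|=2^{p+1}\|h\|_{L^p}^p/h(x^*)^p$, where $\omega_d=\pi^{d/2}/\Gamma(\tfrac d2+1)=\pi^{d/2}/\int_0^\infty z^{d/2}e^{-z}\,dz$ is the unit-ball volume; then $|G_R|\ge\tfrac12|B_R(0)|=2^p\|h\|_{L^p}^p/h(x^*)^p$, and substituting this together with $R^{d+\alpha}=(|B_R(0)|/\omega_d)^{(d+\alpha)/d}$ into the displayed bound and collecting the powers of $h(x^*)$ and of $\|h\|_{L^p}$ yields exactly the second alternative, with $\mathscr{M}_2(d,p,\alpha)=\mathscr{C}_{d,\alpha}\,\omega_d^{1+\alpha/d}/(4\cdot2^{(p+1)\alpha/d})$.

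For the sharpened constants in the borderline case $d=\alpha=1$ I would rerun the same argument using the explicit kernel \eqref{eq:9b}: for $|y|\le R\le\pi$ one has $\sin^2(y/2)\le(y/2)^2\le R^2/4$, hence $1/\sin^2(y/2)\ge4/R^2$; taking $p=1$ and the optimal radius $R=2\|h\|_{L^1}/h(x^*)$, which is admissible precisely when $h(x^*)\ge\tfrac2\pi\|h\|_{L^1}$ (and this fixes $\mathscr{M}_1(1,1,1)=\tfrac2\pi$), the estimate $|G_R|\ge|B_R(0)|-2\|h\|_{L^1}/h(x^*)=R$ gives $\Lambda h(x^*)\ge\frac1{4\pi}\cdot\frac4{R^2}\cdot\frac{h(x^*)}{2}\cdot R=\frac{h(x^*)^2}{4\pi\|h\|_{L^1}}$, i.e.\ $\mathscr{M}_2(1,1,1)=\tfrac1{4\pi}$. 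I expect the only point that needs care to be the bookkeeping that matches the precise value of the threshold $\mathscr{M}_1$ with the admissibility constraint that the auxiliary ball lie inside one fundamental cell; everything else is an elementary rearrangement, and essentially this computation is already contained in the references cited with the statement.
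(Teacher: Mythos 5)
The paper does not actually prove Lemma \ref{lemaaux3}: it is stated with a citation to the three external references in its header, so there is no internal proof to compare against. Your argument is the standard one behind this family of pointwise estimates (nonnegativity of the kernel differences at the maximum point, restriction of the singular integral to a ball $B_R(0)\subset[-\pi,\pi]^d$, a Chebyshev bound for the set where $h$ exceeds $h(x^*)/2$, and an optimization of $R$ subject to the admissibility constraint $R\le\pi$), and it is correct: the exponents of $h(x^*)$ and $\|h\|_{L^p}$ come out right, your computation reproduces the displayed $\mathscr{M}_2(d,p,\alpha)$ exactly, and your separate $d=\alpha=p=1$ run with the kernel \eqref{eq:9b} gives precisely the quoted values $2/\pi$ and $1/(4\pi)$.

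The one discrepancy worth flagging is the general formula for $\mathscr{M}_1$. Your threshold $\omega_d R^d=2^{p+1}\|h\|_{L^p}^p/h(x^*)^p\le\omega_d\pi^d$, with $\omega_d=\pi^{d/2}/\int_0^\infty z^{d/2}e^{-z}\,dz$ the unit-ball volume, yields $\mathscr{M}_1=\bigl(2^{p+1}/(\omega_d\pi^d)\bigr)^{1/p}$, whereas the lemma displays $\mathscr{M}_1=\bigl(\pi^{d/2}\Gamma(\tfrac d2+1)/2^{1+p}\bigr)^{1/p}=\bigl(\pi^{d}/(2^{1+p}\omega_d)\bigr)^{1/p}$; these differ (they essentially swap the roles of $2^{p+1}$ and $\pi^d$), and notably it is \emph{your} formula, not the displayed one, that specializes at $d=p=1$ to the quoted $\mathscr{M}_1(1,1,1)=2/\pi$. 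This is a bookkeeping mismatch in a constant rather than a gap in the argument: the dichotomy you establish has exactly the required form, and every use of the lemma in the paper (excluding alternative (A) by taking $\|u\|_{L^\infty}$ above $2\mathscr{M}_1\mathcal{Q}_2$ in the proof of Theorem \ref{thm1}, and the explicit $2/\pi$ in the proof of Theorem \ref{thm2b}) only requires the existence of some such constant, respectively the one-dimensional value that you do obtain.
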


\section*{Acknowledgements}
RGB was supported by the LABEX MILYON (ANR-10-LABX-0070) of Universit\'e de Lyon, within the program ``Investissements d'Avenir'' (ANR-11-IDEX-0007) operated by the French National Research Agency (ANR).

\bibliographystyle{abbrv}
\bibliography{bibliografia}

\end{document}